\renewcommand{\phi}{\varphi}
\renewcommand{\epsilon}{\varepsilon}
\newtheorem{thmCountBase}{}[section]
\newtheorem{lemma}[thmCountBase]{Lemma}
\newtheorem{proposition}[thmCountBase]{Proposition}
\newtheorem{theorem}[thmCountBase]{Theorem}
\theoremstyle{definition}
\newtheorem{definition}[thmCountBase]{Definition}
\newcommand{\bbbrace}{\left\{\begin{aligned}}
\newcommand{\ebbrace}{\end{aligned}\right.}
\newcommand{\bbrace}[1]{\left\{\begin{aligned}#1\end{aligned}\right.}
\newcommand{\afix}[2]{\operatorname{AF}_{#2}(#1)}
\newcommand{\afixb}[2]{{\operatorname{AF}}_{#2}(#1)}
\newcommand{\abs}[1]{\left\lvert#1\right\rvert}
\newcommand{\ballc}[2]{\operatorname{B}_{#1}(#2)}
\newcommand{\ceil}[1]{\left\lceil#1\right\rceil}
\newcommand{\fix}[1]{\operatorname{Fix}(#1)}
\newcommand{\inner}[2]{\left\langle#1,#2\right\rangle}
\newcommand{\N}{\mathbb{N}}
\newcommand{\ol}[1]{\overline{#1}}
\newcommand{\ql}[4]{\inner{\overrightarrow{#1#2}}{\overrightarrow{#3#4}}}
\newcommand{\R}{\mathbb{R}}
\newcommand{\set}[1]{\left\{#1\right\}}
\newcommand{\ul}[1]{\underline{#1}}
\newcommand{\wh}[1]{\widehat{#1}}
\newcommand{\wt}[1]{\widetilde{#1}}
\title{Quantitative metastability of the Tikhonov-Mann iteration for countable families of mappings}
\author{Hora\c{t}iu Cheval$^a$}
\date{$^a$ \footnotesize{Research Center for Logic, Optimization and Security (LOS), Department of Computer Science, \\
Faculty of Mathematics and Computer Science, University of Bucharest \\
Academiei 14, 010014 Bucharest, Romania} \\ \mbox{} \\
E-mail: horatiu.cheval@unibuc.ro
}
\setlist[enumerate, 1]{label={(\roman*)}}
\begin{document}

    \maketitle{}

    \begin{abstract}
        In this paper, we obtain rates of metastability for the 
        Tikhonov-Mann iteration for countable families of mappings in CAT(0) spaces. 
        This iteration was recently defined by the author in the setting of $W$-hyperbolic spaces 
        as a generalization of the strongly convergent version of the Krasnoselskii-Mann iteration 
        introduced by Bo\c{t} and Meier for finding common fixed points of families of nonexpansive mappings in Hilbert spaces,
        and as an extension of the Tikhonov-Mann iteration for single mappings, for which Leu\c{s}tean and the author
        obtained rates of asymptotic regularity in $W$-hyperbolic spaces.
        
        \textit{Keywords:} Tikhonov-Mann iteration, Rates of metastability, Common fixed points, Proof mining

        \textit{Mathematics Subject Classification:} 47J25, 47H09, 03F10
    \end{abstract}

    \section{Introduction}

    Bo\c{t} and Meier \cite{BotMei21} considered the following iterative method for 
    finding a common fixed point of a family $(T_n : H \to H)$ of nonexpansive self-mappings of $H$:  
    \begin{align}
        x_0 \in H, \quad x_{n + 1} = (1 - \lambda_n) \beta_n x_n + \lambda_n T_n (\beta_n x_n), \label{eq:tmf-hilbert}
    \end{align}
    where $(\lambda_n)$ and $(\beta_n)$ are sequences in $[0, 1]$. 
    It is proved in \cite[Theorem~1]{BotMei21} that, 
    under certain conditions on the parameters of the iteration and on the family $(T_n)$,
    $(x_n)$ converges strongly to a point in $\bigcap\limits_{n \in \N} \operatorname{Fix}(T_n)$.

    We say that a $W$-hyperbolic space \cite{Koh05} is a metric space $(X, d)$ together with 
    a mapping $W : X \times X \times [0, 1] \to X$, satisfying the following axioms, for all $x, y, z \in X$ and all $\lambda, \theta \in [0, 1]$: \\[2mm]
    \begin{tabular}{lll}
        (W1) & $d(z, (1 - \lambda) x + \lambda y) \leq (1 - \lambda) d(z, x) + \lambda d(z, y)$; \label{w1} \\[2mm] 
        (W2) & $d((1 - \lambda) x + \lambda y, (1 - \theta) x + \theta y) = \abs{\lambda - \theta}d(x, y)$; \label{w2} \\[2mm]
        (W3) & $(1 - \lambda) x + \lambda y = \lambda y + (1 - \lambda) x$; \label{w3}\\[2mm] 
        (W4) & $d((1 - \lambda) x + \lambda z, (1 - \lambda) y + \lambda w) \leq (1 - \lambda) d(x, y) + \lambda d(z, w)$. \label{w4}\\[2mm]
    \end{tabular}
    
    $W(x, y, \lambda)$ is meant to be read as an abstract convex combination between the points $x$ and $y$ of parameter $\lambda$. 
    For this reason, the notation $(1 - \lambda) x + \lambda y := W(x, y, \lambda)$ is normally used instead. 
    Examples of $W$-hyperbolic spaces include real normed spaces, Busemann spaces \cite{Pap05} or CAT(0) spaces \cite{AleKapPet19,BriHae99}. 
    The following generalization to $W$-hyperbolic spaces of iteration \eqref{eq:tmf-hilbert}
    was studied by the author in \cite{Che23}. Let $(X, d, W)$ be a $W$-hyperbolic space and  
    $(T_n : X \to X)$ be a family of nonexpansive mappings.
    The \emph{Tikhonov-Mann iteration} associated to $(T_n)$, of parameters 
    $(\lambda_n), (\beta_n) \subset [0, 1]$ and anchor point $u \in X$, generates a sequence $(x_n)$ by
    \begin{align}
        x_0 \in X, \quad\quad\quad
        \bbrace{
            x_{n + 1} &= (1 - \lambda_n) u_n + \lambda_n T_n u_n, \\ 
            u_n &= (1 - \beta_n) u + \beta_n x_n.
        }
        \label{eq:tmf}
    \end{align}
    Note that if $X$ is a Hilbert space and $u = 0$, one recovers iteration \eqref{eq:tmf-hilbert}.

    The main results of this paper, which are part of the program of \emph{proof mining} \cite{Koh08-book,Koh17, Koh19} concern the metastability (in the sense of Tao \cite{Tao07, Tao08}) 
    of iteration \eqref{eq:tmf} in CAT(0) spaces, with quantitative information in the form of \emph{rates of metastability}. 
    A sequence $(a_n)$ in a metric space $(X, d)$ is said to be \emph{metastable} if  
    \begin{align*}
        \forall k \in \N \forall f \in \N^\N \exists n \in \N \forall i, j \in [n, f(n)] \left(d(a_i, a_j) \leq \frac{1}{k + 1}\right),
    \end{align*}
    where $[n, f(n)]$ is the set $\set{n, n + 1, \hdots, f(n)}$, and a rate thereof is a function providing an
    upper bound on $n$ for each $k$ and $f$.
    Metastability is equivalent to the Cauchy property of $(a_n)$, hence also to its convergence in complete spaces,
    but this equivalence is not effective, meaning that generally one cannot transform rates of metastability 
    into quantitative information on the convergence of $(a_n)$. The importance of metastability 
    in the context of proof mining lies in that, while rates of convergence cannot be extracted in general, 
    by moving to this equivalent formulation, one can guarantee the extractability of rates via logical metatheorems.

    The main results of \cite{Che23} consist of rates of ($(T_n)$-)asymptotic regularity 
    (i.e. rates of convergence for $d(x_n, x_{n + 1}) \to 0$ and $d(x_n, T_n x_n) \to 0$) for \eqref{eq:tmf}.
    The single mapping case $T_n = T$ was studied by Leu\c{s}tean and the author in \cite{CheLeu22}, 
    as a nonlinear generalization of the Krasnoselskii-Mann iteration with Tikhonov regularization 
    terms proved strongly convergent by Bo\c{t}, Csetnek and Meier \cite{BotCseMei19} in Hilbert spaces,
    and for which Dinis and Pinto \cite{DinPin21} computed rates of metastability.
    As shown in \cite{CheKohLeu23}, the strong convergence of the Tikhonov-Mann iteration 
    already follows in CAT(0) spaces from results in \cite{CunPan11}, from which Schade and Kohlenbach 
    extracted rates of metastability in \cite{SchKoh12}.
    Another generalization of the Tikhonov-Mann iteration is the alternating Halpern-Mann iteration 
    introduced by Dinis and Pinto in \cite{DinPin23}, where the authors compute rates of metastability in CAT(0) spaces,
    with rates of asymptotic regularity having been obtained in the more general setting of $UCW$-hyperbolic spaces 
    by Leu\c{s}tean and Pinto in \cite{LeuPin23}.

    \section{Preliminaries}

    As mentioned in the introduction, one important example of $W$-hyperbolic spaces is given by CAT(0) spaces,
    which can be described  \cite[p. 386--388]{Koh08-book} as the $W$-hyperbolic spaces satisfying the following so-called $\operatorname{CN}^-$ inequality,
    for all $x, y, z \in X$:
    \begin{align}
        d^2\left(z, \frac{1}{2} x + \frac{1}{2} y\right) \leq \frac{1}{2} d^2(z, x) + \frac{1}{2} d^2(z, y) - \frac{1}{4} d^2(x, y). \label{eq:cn-minus}
    \end{align}
    Furthermore, \eqref{eq:cn-minus} is in fact enough to derive the following, for all $x, y, z \in X$ and $\lambda \in [0, 1]$.
    \begin{align}
        d^2(z, (1 - \lambda) x + \lambda y) \leq (1 - \lambda) d^2(z, x) + \lambda d^2(z, y) - \lambda (1 - \lambda) d^2(x, y). \label{eq:cn-plus}
    \end{align}

    Leu\c{s}tean \cite{Leu07} showed that CAT(0) spaces are uniformly convex with modulus $\frac{\epsilon^2}{8}$, meaning that, 
    for all $r > 0$, $\epsilon \in (0, 2]$ and all $a, x, y \in X$, 
    \begin{align*}
        \bbrace{
            d(a, x) &\leq r \\ 
            d(a, y) &\leq r \\ 
            d(x, y) &\geq \epsilon r 
        }
        \quad\to\quad d\left(a, \frac{1}{2} x + \frac{1}{2} y\right) \leq \left(1 - \frac{\epsilon^2}{8}\right) r.
    \end{align*}

        
    The following metric generalization of the inner product was introduced in \cite{BerNik08},
    and allows one to extend several arguments from Hilbert space to a nonlinear setting.
    \begin{definition}\label{dfn:quasilinearization}
        Let $(X, d)$ be a metric space. The \emph{Berg-Nikolaev quasilinearization} 
        is the mapping $\ql{\cdot}{\cdot}{\cdot}{\cdot} : X^2 \times X^2 \to \R$ defined by 
        \begin{align}
            \ql{x}{y}{u}{v} = \frac{1}{2}\left(d^2(x, v) + d^2(y, u) - d^2(x, u) - d^2(y, v) \right).
        \end{align}
    \end{definition}    
    The quasilinearization mapping is characterized by the following proposition.
    \begin{proposition}\cite{BerNik08}
        The Berg-Nikolaev quasilinearization is the unique mapping satisfying the following,
        for all $x, y, u, v, w \in X$:
        \begin{enumerate}
            \item $\ql{x}{y}{x}{y} = d^2(x, y)$;
            \item $\ql{x}{y}{u}{v} = \ql{u}{v}{x}{y}$;
            \item $\ql{x}{y}{u}{v} = -\ql{y}{x}{u}{v}$;
            \item $\ql{x}{y}{u}{v} + \ql{x}{y}{v}{w} = \ql{x}{y}{u}{w}$.
        \end{enumerate}
    \end{proposition}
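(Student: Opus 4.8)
The plan is to split the argument into two independent parts: first checking that the Berg--Nikolaev quasilinearization indeed satisfies (i)--(iv), and then showing it is the \emph{only} mapping that does so. The first part is a routine computation directly from the defining formula together with the symmetry and reflexivity of $d$, so I would simply record it: (i) holds because $d(x,x)=d(y,y)=0$ and $d$ is symmetric; (ii) holds because the four squared distances $d^2(x,v), d^2(y,u), d^2(x,u), d^2(y,v)$ are reproduced (in symmetric form) when the two pairs are interchanged; (iii) holds because swapping $x$ and $y$ turns the two ``positive'' terms $d^2(x,v)+d^2(y,u)$ into the ``negative'' ones $d^2(y,v)+d^2(x,u)$ and vice versa; and (iv) follows by expanding $\ql{x}{y}{u}{v}+\ql{x}{y}{v}{w}$ and cancelling the terms $\pm d^2(x,v)$ and $\pm d^2(y,v)$.

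For uniqueness, let $\Phi : X^2 \times X^2 \to \R$ be an arbitrary mapping satisfying (i)--(iv); I want to conclude that $\Phi(x,y,u,v)=\ql{x}{y}{u}{v}$ for all $x,y,u,v$. The first step is to derive an antisymmetry in the \emph{second} pair of arguments by chaining (ii), (iii), (ii): $\Phi(x,y,v,u)=\Phi(v,u,x,y)=-\Phi(u,v,x,y)=-\Phi(x,y,u,v)$. Together with (i) this already gives $\Phi(a,b,b,a)=-\Phi(a,b,a,b)=-d^2(a,b)$.

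The key step is then to pin down the ``half-diagonal'' values $\Phi(x,y,x,v)$. Fixing $x,y,v$ and writing $A\defEq\Phi(x,y,x,v)$, I would alternate (ii) (to swap the two pairs) with (iv) (to split off a fully diagonal term, evaluated via (i)) and the second-pair antisymmetry just derived, obtaining the chain
\begin{align*}
    A &= \Phi(x,v,x,y) = d^2(x,v) + \Phi(x,v,v,y), \\
    \Phi(x,v,v,y) &= \Phi(v,y,x,v) = \Phi(v,y,x,y) - d^2(v,y), \\
    \Phi(v,y,x,y) &= \Phi(x,y,v,y) = -A + d^2(x,y).
\end{align*}
Back-substitution collapses this to the single linear equation $2A = d^2(x,v)+d^2(x,y)-d^2(y,v)$, so that $\Phi(x,y,x,v)=\frac{1}{2}\left(d^2(x,v)+d^2(x,y)-d^2(y,v)\right)=\ql{x}{y}{x}{v}$.

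Finally, for arbitrary $x,y,u,v$, one application of (iv) followed by the second-pair antisymmetry gives $\Phi(x,y,u,v)=\Phi(x,y,u,x)+\Phi(x,y,x,v)=-\Phi(x,y,x,u)+\Phi(x,y,x,v)$, and substituting the half-diagonal formula at the two points $u$ and $v$ yields exactly $\frac{1}{2}\left(d^2(x,v)+d^2(y,u)-d^2(x,u)-d^2(y,v)\right)=\ql{x}{y}{u}{v}$. I expect the only genuinely delicate point to be arranging the substitutions in the key step so that the chain closes into a solvable equation for $A$ rather than into a tautology; everything else is bookkeeping with the symmetry and additivity axioms.
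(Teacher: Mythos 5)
Your proposal is correct: the verification of (i)--(iv) is routine as you say, and your uniqueness argument checks out --- the second-pair antisymmetry obtained by chaining (ii), (iii), (ii), the closed chain giving $2A = d^2(x,v)+d^2(x,y)-d^2(y,v)$ for $A=\Phi(x,y,x,v)$, and the final decomposition $\Phi(x,y,u,v)=-\Phi(x,y,x,u)+\Phi(x,y,x,v)$ via (iv) all hold. The paper itself offers no proof of this proposition (it is quoted from Berg--Nikolaev), so there is nothing to compare against; your self-contained argument is a valid way to establish both the axioms and the uniqueness.
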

    Furthermore, if $X$ is a CAT(0) space, one has that the following Cauchy-Schwarz inequality holds,
    for all $x, y, u, v \in X$:
    \begin{align*}
        \ql{x}{y}{u}{v} \leq d(x, y) d(u, v).
    \end{align*}

    We now introduce some quantitative notions required to express our results.
    Let $(X, d)$ be a metric space and $(a_n)$ be a sequence in $X$. 
    We say that a function $\phi : \N \to \N$ is a \emph{rate of convergence} for $(a_n)$ to a point $a \in X$ if 
    \begin{align*}
        \forall k \in \N \forall n \geq \phi(k) \left(d(a_n, a) \leq \frac{1}{k + 1}\right),
    \end{align*}
    and that it is a \emph{Cauchy modulus} for $(a_n)$ if 
    \begin{align*}
        \forall k \in \N \forall n \geq \phi(k) \forall j \in \N \left(d(a_{n + j}, a_n) \leq \frac{1}{k + 1}\right).
    \end{align*}
    $(a_n)$ is said to be \emph{asymptotically regular} if $\lim\limits_{n \to \infty} d(a_n, a_{n  +1}) = 0$,
    $T$-asymptotically regular if $\lim\limits_{n \to \infty} d(a_n, T a_n) = 0$
    and $(T_n)$-asymptotically regular if $\lim\limits_{n \to \infty} d(a_n, T_n a_n) = 0$,
    where $T : X \to X$ and $(T_n : X \to X)_{n \in \N}$. A rate of ($T$-, $(T_n)$-)asymptotic regularity 
    is a rate of convergence to $0$ for the respective sequences.

    A function $\mu : \N \times \N^\N \to \N$ is called a \emph{rate of metastability} for the sequence $(a_n)$ if 
    \begin{align*}
        \forall k \in \N \forall f \in \N^\N \exists n \leq \mu(k, f) \forall i, j \in [n, f(n)] \left(d(a_i, a_j) \leq \frac{1}{k + 1}\right),
    \end{align*}
    where $[n, f(n)] = \set{n, n + 1, \hdots, f(n)}$.
    We may restrict the definition of metastability to only quantify over monotone (i.e. nondecreasing) functions $f : \N \to \N$,
    as we can replace $f$ with the monotone function $f^M(k) = \max\limits_{i \leq k} f(i)$.

    Finally, a \emph{rate of divergence} for a series $\sum\limits_{n = 0}^\infty b_n$ of nonnegative real numbers 
    is a function $\theta : \N \to \N$ such that 
    \begin{align*}
        \forall n \in \N \left(\sum_{i = 0}^{\theta(n)} b_i \geq n \right).
    \end{align*}

    The main convergence theorem of \cite{BotMei21} relies 
    on a widely used lemma on real numbers by Xu \cite{Xu02}.
    Several quantitative versions of \cite[Lemma~2.5]{Xu02} 
    have been given \cite{KohLeu12, LeuPin21} in the context of proof mining.
    The particular variant we will use is the special case with $\gamma_n = 0$ of \cite[Lemmas~14,~16]{Pin21} 
    (see also \cite[Lemma~2.6]{DinPin21}).

    \begin{lemma}\label{lem:xu-metastable}
        Consider the sequences $(s_n) \subset [0, \infty)$, $(a_n) \subset (0, 1)$, 
        $(r_n)$ and $(v_n) \subset \R$ 
        such that $(s_n)$ is bounded above by $S \in \N$ and 
        that, for all $n \in \N$, the following inequality holds
        \begin{align}
            s_{n + 1} \leq (1 - a_n) (s_n + v_n) + a_n r_n.
        \end{align}
        Let $k, n, q \in \N$ be such that 
        \begin{enumerate}
            \item \label{lem:xu-metastable-2}
            $\forall i \in [n, q] \left(v_i \leq \dfrac{1}{3(k + 1)(q + 1)}\right)$;
            \item \label{lem:xu-metastable-3} 
            $\forall i \in [n, q] \left(r_i \leq \dfrac{1}{3(k + 1)}\right)$.
        \end{enumerate}
        Then, following hold:
        \begin{enumerate}
            \item Suppose $\sum\limits_{n = 0}^\infty a_n$ diverges with a monotone rate $\sigma$. Then  
            \begin{align}
                \forall i \in [\zeta(k, n), q] \left(s_i \leq \frac{1}{k + 1}\right),
            \end{align}
            where $\zeta(k, n) := \sigma(n + \ln\ceil{(3S(k + 1))}) + 1$.

            \item
             Suppose $\sigma^* : \N \times \N \to \N$ is a monotone function such that, for any $m \in \N$, 
             $\sigma^*(m, \cdot)$ is a rate of convergence for $\prod\limits_{n = m}^\infty (1 - a_n) = 0$.
            Then, 
            \begin{align}
                \forall i \in [\zeta(k, n), q] \left(s_i \leq \frac{1}{k + 1}\right),
            \end{align}
            where $\zeta^*(k, n) := \sigma^*(n, 3S(k + 1) - 1) + 1$.
        \end{enumerate}
    \end{lemma}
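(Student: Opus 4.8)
The plan is to unfold the recursion starting from index $n$ and then to bound the accumulated error terms and the remaining product separately. For $n \le j \le m$ write $P_{j, m} := \prod_{l = j}^{m - 1}(1 - a_l)$, with the empty product $P_{m, m} := 1$, so that $P_{j + 1, m}(1 - a_j) = P_{j, m}$. Since the hypothesis can be rewritten as $s_{l + 1} \le (1 - a_l) s_l + (1 - a_l) v_l + a_l r_l$ and each $1 - a_l \ge 0$, a straightforward induction on $m$ gives, for all $m$ with $n \le m \le q$,
\[
    s_m \le P_{n, m}\, s_n + \sum_{j = n}^{m - 1}\bigl(P_{j, m}\, v_j + P_{j + 1, m}\, a_j\, r_j\bigr).
\]

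I would then estimate the two sums. Each $P_{j, m} \in [0, 1]$ and, by \ref{lem:xu-metastable-2}, $v_j \le \tfrac{1}{3(k + 1)(q + 1)}$, a positive quantity which therefore bounds $P_{j, m}\, v_j$ even when $v_j < 0$; as there are at most $q + 1$ indices in the range, $\sum_{j = n}^{m - 1} P_{j, m}\, v_j \le \tfrac{1}{3(k + 1)}$. For the second sum I would use the telescoping identity $P_{j + 1, m}\, a_j = P_{j + 1, m} - P_{j, m}$, so that $\sum_{j = n}^{m - 1} P_{j + 1, m}\, a_j = 1 - P_{n, m} \le 1$; combined with \ref{lem:xu-metastable-3}, $r_j \le \tfrac{1}{3(k + 1)}$ (again positive), this yields $\sum_{j = n}^{m - 1} P_{j + 1, m}\, a_j\, r_j \le \tfrac{1}{3(k + 1)}$. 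Hence $s_m \le P_{n, m}\, s_n + \tfrac{2}{3(k + 1)}$ for every $m \in [n, q]$, and since $s_n \le S$ it suffices to show that $P_{n, i} \le \tfrac{1}{3 S(k + 1)}$ for $i$ in the relevant interval (we may harmlessly assume $S \ge 1$), because then $s_i \le \tfrac{S}{3 S(k + 1)} + \tfrac{2}{3(k + 1)} = \tfrac{1}{k + 1}$. Note that any such $i$ satisfies $n < i \le q$, so \ref{lem:xu-metastable-2} and \ref{lem:xu-metastable-3} apply to all $j \in [n, i - 1] \subseteq [n, q]$.

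For part (i), from $1 - x \le e^{-x}$ we have $P_{n, i} \le \exp\bigl(-\sum_{l = n}^{i - 1} a_l\bigr)$, so it suffices to ensure $\sum_{l = n}^{i - 1} a_l \ge \ln\bigl(3 S(k + 1)\bigr)$. Since each $a_l < 1$ we have $\sum_{l = 0}^{n - 1} a_l \le n$, which reduces the task to $\sum_{l = 0}^{i - 1} a_l \ge n + \ln\bigl(3 S(k + 1)\bigr)$; by the defining property of the rate of divergence $\sigma$ this holds as soon as $i - 1 \ge \sigma\bigl(n + \ln\ceil{3 S(k + 1)}\bigr)$, i.e. $i \ge \zeta(k, n)$. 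For part (ii) one argues directly: if $i \ge \zeta^*(k, n) = \sigma^*\bigl(n, 3 S(k + 1) - 1\bigr) + 1$, then $i - 1 \ge \sigma^*\bigl(n, 3 S(k + 1) - 1\bigr)$, and since $\sigma^*(n, \cdot)$ is a rate of convergence for $\prod_{l = n}^{\infty}(1 - a_l) = 0$, this gives $P_{n, i} = \prod_{l = n}^{i - 1}(1 - a_l) \le \tfrac{1}{3 S(k + 1)}$. In either case the reduction above completes the proof.

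The routine but somewhat delicate point is the constant-chasing in the error estimate: one must notice that the factor $q + 1$ in \ref{lem:xu-metastable-2} is exactly what absorbs the up-to-$q$ many $v_j$-terms, whereas \ref{lem:xu-metastable-3} needs no such factor because the weights $P_{j + 1, m}\, a_j$ telescope to at most $1$. One should also keep in mind that $v_j$ and $r_j$ are only bounded from above (and may be negative), and, in part (i), be careful with the index shift between $\sum_{l = 0}^{i - 1}$ and $\sum_{l = n}^{i - 1}$ as well as with the minor rounding hidden in $\ln\ceil{3 S(k + 1)}$.
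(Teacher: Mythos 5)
Your proof is correct. Note, however, that the paper does not prove this lemma at all: it is imported as the special case $\gamma_n = 0$ of Lemmas 14 and 16 of \cite{Pin21} (see also Lemma 2.6 of \cite{DinPin21}), so there is no in-paper argument to compare against; your write-up is a self-contained verification of what the paper takes on citation. The route you take is the standard one behind those cited results: unfold the recursion into $s_m \leq P_{n,m}s_n + \sum_{j=n}^{m-1}(P_{j,m}v_j + P_{j+1,m}a_j r_j)$, absorb the at most $q+1$ many $v_j$-terms using the $\frac{1}{3(k+1)(q+1)}$ bound, telescope the weights $P_{j+1,m}a_j = P_{j+1,m} - P_{j,m}$ to handle the $r_j$-terms, and then force $P_{n,i} \leq \frac{1}{3S(k+1)}$ either via $1 - x \leq e^{-x}$ together with the divergence rate $\sigma$ (using $a_l < 1$ to pass from $\sum_{l=n}^{i-1}$ to $\sum_{l=0}^{i-1}$), or directly via $\sigma^*(n,\cdot)$. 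Your attention to the signs of $v_j$, $r_j$ (only upper bounds are assumed) and to the rounding in $\zeta(k,n)$ is apt; the expression $\ln\ceil{3S(k+1)}$ in the statement should indeed be read as $\ceil{\ln(3S(k+1))}$ (as in the paper's later uses of $\zeta$), and monotonicity of $\sigma$, plus positivity of the $a_l$, covers the remaining index bookkeeping, including the easy fact $\zeta(k,n) > n$ that your decomposition implicitly needs.
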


    The following conditions on the parameters of iteration \eqref{eq:tmf},
    which are quantitative counterparts of those used in \cite[Theorem~1]{BotMei21},
    will be considered throughout.
    
    \newcommand{\qcondSumOneMinusBetaDivergent}{(C1_q)}
    \newcommand{\qcondProdBetaZero}{(C1_q^*)}
    \newcommand{\qcondSumConseqLambdaConvergent}{(C3_q)}
    \newcommand{\qcondSumConseqBetaConvergent}{(C2_q)}
    \newcommand{\qcondLimBetaOne}{(C4_q)}
    \newcommand{\qcondLiminfBetaGtZero}{(C5_q)} 
    \newcommand{\qcondBetaGtZero}{(C9_q)}

    \newcommand{\rSumOneMinusBetaDivergent}{\sigma}
    \newcommand{\rProdBetaZero}{\sigma^*}
    \newcommand{\rSumConseqLambdaConvergent}{\chi_\lambda}
    \newcommand{\rSumConseqBetaConvergent}{\chi_\beta}
    \newcommand{\rLimBetaOne}{\eta}
    \newcommand{\rGammaConseqCauchy}{\chi_\gamma}
    
    \newcommand{\qcondTnConseqCauchy}{(C6_q)}
    \newcommand{\rTnConseqCauchy}{\chi_T}
    
    \begin{tabular}{lll}
        $\qcondSumOneMinusBetaDivergent$ & $\sum\limits_{n = 0}^\infty (1 - \beta_n)$ diverges with monotone rate of divergence $\rSumOneMinusBetaDivergent$; \\[3mm] 
        $\qcondProdBetaZero$ & $\rProdBetaZero : \N \times \N \to \N$ is a monotone function such that \\ & for any $m \in \N$, $\prod\limits_{n = m}^\infty \beta_{n} = 0$ with rate of convergence $\rProdBetaZero(m, \cdot)$; \\[3mm] 
        $\qcondSumConseqBetaConvergent$ & $\sum\limits_{n = 0}^\infty \abs{\beta_n - \beta_{n + 1}}$ converges with Cauchy modulus $\rSumConseqBetaConvergent$; \\[3mm]
        $\qcondSumConseqLambdaConvergent$ & $\sum\limits_{n = 0}^\infty \abs{\lambda_n - \lambda_{n + 1}}$ converges with Cauchy modulus $\rSumConseqLambdaConvergent$; \\[3mm]
        $\qcondLimBetaOne$ & $\lim\limits_{n \to \infty} \beta_n = 1$ with rate of convergence $\rLimBetaOne$; \\[3mm] 
        $\qcondLiminfBetaGtZero$ & $\Lambda \in \N^*$ and $N_\Lambda \in \N$ are such that $\lambda_n \geq \frac{1}{\Lambda}$ for all $n \geq N_\Lambda$; \\[3mm]
        $\qcondTnConseqCauchy$ & $\sum\limits_{n = 0}^\infty d(T_{n + 1} u_n, T_n u_n)$ converges with Cauchy modulus $\rTnConseqCauchy$. \\[3mm]
    \end{tabular}
    
    The conditions above were used in \cite{Che23} to obtain rates of ($(T_n)$-)asymptotic regularity for $(x_n)$
    in general $W$-hyperbolic spaces, with the caveat that condition $\qcondProdBetaZero$ here 
    is a different quantitative formulation of the fact that $\prod\limits_{n = 0}^\infty \beta_n = 0$ from the one used in \cite{Che23}.
    This is further equivalent to the assumption that $\sum\limits_{n = 0}^\infty (1 - \beta_n) = \infty$,
    but, as first observed by Kohlenbach \cite{Koh11}, their quantitative content may in general be different.
    Let us now recall the following sufficient conditions for the family $(T_n)$ to satisfy $\qcondTnConseqCauchy$.
    \begin{proposition}\cite[Proposition~3.4]{Che23}\label{prop:ggg}
        Let $(\gamma_n)$ be a sequence in $(0, \infty)$ satisfying:

        \begin{tabular}{lll}
            $(C7_q)$ $\sum\limits_{n = 0}^\infty \abs{\gamma_n - \gamma_{n + 1}}$ is convergent with Cauchy modulus $\rGammaConseqCauchy$;\\[3mm]
            $(C8_q)$ $\Gamma \in \N^*$ and $N_\Gamma \in \N$ are such that $\gamma_n \geq \frac{1}{\Gamma}$ for all $n \geq N_\Gamma$. \\[3mm]
        \end{tabular}

        Let $p \in \bigcap_{n \in \N} \operatorname{Fix(T_n)}$ be a common fixed point and suppose 
        that $(T_n)$ satisfies the following,
        for all $n, m \in \N$ and $x \in X$: 
        \begin{align}
            d(T_n x, T_m x) \leq \frac{\abs{\gamma_m - \gamma_n}}{\gamma_n} d(T_n x, x). \label{eq:prePtwo}
        \end{align}
        Then, $\rTnConseqCauchy$ defined below is a Cauchy modulus for $\sum\limits_{n = 0}^\infty d(T_{n + 1} u_n, T_n u_n)$:
        \begin{align}
            \rTnConseqCauchy(k) = \max\set{N_\Gamma, \rGammaConseqCauchy(2K\Gamma(k + 1) - 1)},
        \end{align}
        where $K \geq \max\set{d(x_0, p), d(u, p)}$.
    \end{proposition}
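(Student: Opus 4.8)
The plan is to reduce the tail estimates for the series $\sum_{n=0}^\infty d(T_{n+1} u_n, T_n u_n)$ to the Cauchy modulus $\chi_\gamma$ of the variation series $\sum_n \abs{\gamma_n - \gamma_{n+1}}$, by combining the pointwise inequality \eqref{eq:prePtwo} with a uniform bound on the iterates controlled by $K$. First I would prove that $d(x_n, p) \leq K$ for every $n$. This is a routine Fej\'er-type induction on the definition \eqref{eq:tmf}: using (W1) together with the nonexpansiveness of $T_n$ and $T_n p = p$ one gets $d(T_n u_n, p) \leq d(u_n, p)$, hence $d(x_{n+1}, p) \leq (1 - \lambda_n) d(u_n, p) + \lambda_n d(T_n u_n, p) \leq d(u_n, p)$, and again by (W1), $d(u_n, p) \leq (1 - \beta_n) d(u, p) + \beta_n d(x_n, p) \leq \max\set{d(u, p), d(x_n, p)}$; since $d(x_0, p) \leq K$ and $d(u, p) \leq K$, induction yields $d(x_n, p) \leq K$, whence also $d(u_n, p) \leq K$ and, by the triangle inequality plus nonexpansiveness, $d(T_n u_n, u_n) \leq d(T_n u_n, T_n p) + d(p, u_n) \leq 2 d(u_n, p) \leq 2K$ for all $n$.

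Next I would estimate each term of the series. Applying \eqref{eq:prePtwo} with the indices $n := i$, $m := i + 1$ and the point $x := u_i$ gives
\begin{align*}
    d(T_{i+1} u_i, T_i u_i) \leq \frac{\abs{\gamma_{i+1} - \gamma_i}}{\gamma_i} d(T_i u_i, u_i) \leq \frac{2K \abs{\gamma_{i+1} - \gamma_i}}{\gamma_i}.
\end{align*}
For every $i \geq N_\Gamma$, condition $(C8_q)$ gives $\gamma_i \geq \frac{1}{\Gamma}$, and therefore $d(T_{i+1} u_i, T_i u_i) \leq 2K\Gamma \abs{\gamma_{i+1} - \gamma_i}$, i.e. each term of our series is dominated by $2K\Gamma$ times the corresponding term of $\sum_n \abs{\gamma_n - \gamma_{n+1}}$ once the index exceeds $N_\Gamma$.

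Finally I would conclude: fix $k \in \N$ and $n \geq \chi_T(k) = \max\set{N_\Gamma, \chi_\gamma(2K\Gamma(k+1) - 1)}$. Since $n \geq N_\Gamma$, the domination above holds for every $i \geq n$, so for each $j \in \N$ the corresponding finite tail of $\sum_n d(T_{n+1} u_n, T_n u_n)$ is at most $2K\Gamma$ times the corresponding finite tail of $\sum_n \abs{\gamma_n - \gamma_{n+1}}$; and since $n \geq \chi_\gamma(2K\Gamma(k+1) - 1)$, condition $(C7_q)$ bounds that latter tail by $\frac{1}{2K\Gamma(k+1)}$, which gives the desired bound $\frac{1}{k+1}$. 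Hence $\chi_T$ is a Cauchy modulus for the series. I expect no real obstacle here: the only slightly delicate point is verifying the uniform boundedness of the iterates and that the bound can be taken to be the explicit constant $K$ in the statement — the rest is a term-by-term comparison with the variation series of $(\gamma_n)$ requiring no telescoping.
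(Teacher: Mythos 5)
Your proof is correct and follows what is evidently the intended argument: this proposition is not proved in the paper itself but quoted from \cite[Proposition~3.4]{Che23}, and the modulus $\max\set{N_\Gamma, \chi_\gamma(2K\Gamma(k+1)-1)}$ encodes exactly your steps --- the (W1)/nonexpansiveness induction giving $d(x_n,p), d(u_n,p) \leq K$ and hence $d(T_i u_i, u_i) \leq 2K$, the term-by-term domination $d(T_{i+1}u_i, T_i u_i) \leq 2K\Gamma\abs{\gamma_{i+1}-\gamma_i}$ for $i \geq N_\Gamma$ via \eqref{eq:prePtwo} and $(C8_q)$, and the final appeal to the Cauchy modulus $\chi_\gamma$ at $2K\Gamma(k+1)-1$. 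No gaps; nothing further is needed.
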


    Condition \eqref{eq:prePtwo} above was introduced by Leu\c{s}tean, Nicolae and Sipo\c{s} in \cite{LeuNicSip18},
    called Condition $(C1)$ there. 
    It is shown in \cite{LeuNicSip18} that if $(T_n)$ is jointly $(P_2)$ (in particular jointly firmly nonexpansive) with respect to a sequence $(\gamma_n) \subset (0, \infty)$,
    then it satisfies \eqref{eq:prePtwo}. 
    Concrete examples of families that satisfy \eqref{eq:prePtwo} given in \cite{LeuNicSip18}
    include: proximal mappings of proper, convex, lower semicontinuous functions, 
    resolvent of nonexpansive self-mappings of CAT(0) spaces,
    and resolvents of maximally monotone operators on Hilbert spaces.
    
    Additionally, we will also use the following condition on $(\beta_n)$ \\[2mm]
    \begin{tabular}{lll}
        $\qcondBetaGtZero$ & $B : \N \to \N^*$ is such that $\forall n \in \N \left(\beta_n \geq \frac{1}{B(n)}\right)$. \\[3mm]
    \end{tabular}

\section{Main results}

From here on, suppose the family $(T_n)$ has common fixed points and let $p \in X$ be such a point. 
Let $M$ be defined by 
\begin{align}
    M = \max\set{d(x_0, p), d(u, p)}.
\end{align} 
and $K \in \N$ be such that $K \geq M$.

The main results of \cite{Che23} provide rates of asymptotic regularity 
and $(T_n)$-asymptotic regularity for $(x_n)$, 
under conditions $\qcondProdBetaZero$ -- $\qcondTnConseqCauchy$, 
with a different formulation of $\qcondProdBetaZero$, as explained. 
It is straightforward to restate the asymptotic regularity results from \cite[Theorems~3.5, 3.7]{Che23} 
with the reformulated condition $\qcondProdBetaZero$ used here, and to also give an analogous result for condition $\qcondSumOneMinusBetaDivergent$, as follows.


\begin{theorem}
    Suppose conditions $\qcondSumConseqBetaConvergent$, $\qcondSumConseqLambdaConvergent$, and $\qcondTnConseqCauchy$ hold,
    and define 
    \begin{align*}
        \chi(k) = \max\set{\chi_T(2(k + 1) - 1), \chi_\lambda(8K(k + 1) - 1), \chi_\beta(8K(k + 1) - 1)}.
    \end{align*}
    The following hold: 
    \begin{enumerate}
    \item If $\qcondSumOneMinusBetaDivergent$ is satisfied, then 
        $(x_n)$ is asymptotically regular with rate 
        \begin{align*}
            \Sigma(k) = \rSumOneMinusBetaDivergent(\chi(3k + 2) + 2+ \ceil{\ln(6K(k + 1))}) + 1.
        \end{align*}

        If, furthermore, conditions $\qcondLimBetaOne$ and $\qcondLiminfBetaGtZero$ hold, then 
        $(x_n)$ is $(T_n)$-asymptotically regular with rate        
        \begin{align*}
            \wt{\Sigma}(k) = \max\set{N_\Lambda, \Sigma(2\Lambda(k + 1) - 1), \rLimBetaOne(4K\Lambda(k + 1) - 1)}.
        \end{align*}

    \item Suppose $\qcondProdBetaZero$ holds instead of $\qcondSumOneMinusBetaDivergent$. Then, $(x_n)$ is asymptotically regular with rate 
        \begin{align*}
            \Sigma^*(k) = \rProdBetaZero(\chi(3k + 2), 6K(k + 1) - 1) + 1,
        \end{align*}
        If, furthermore, conditions $\qcondLimBetaOne$ and $\qcondLiminfBetaGtZero$ hold, then 
        $(x_n)$ is $(T_n)$-asymptotically regular with rate        
        \begin{align*}
            \wt{\Sigma^*}(k) = \max\set{N_\Lambda, \Sigma^*(2\Lambda(k + 1) - 1), \rLimBetaOne(4K\Lambda(k + 1) - 1)}.
        \end{align*}

    \end{enumerate}
\end{theorem}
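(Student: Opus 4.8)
The plan is to reduce the two asymptotic regularity claims to Lemma~\ref{lem:xu-metastable} applied to $s_n \defEq d(x_n, x_{n+1})$, essentially re-running the estimates of \cite{Che23} with attention to the present quantitative hypotheses on $(\beta_n)$, and then to obtain the $(T_n)$-asymptotic regularity from a pointwise estimate bounding $d(x_n, T_n x_n)$ in terms of $s_n$ and $1 - \beta_n$. First I would record the boundedness fact: by $(W1)$, $(W4)$ and nonexpansiveness of the $T_n$, an easy induction gives $d(x_n, p) \le M$ for all $n$, whence $s_n \le 2M \le 2K$; this is the value $S = 2K \in \N$ that produces the constant $3S(k+1) = 6K(k+1)$ appearing in $\Sigma$ and $\Sigma^*$. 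The same computation gives, via $(W2)$, the bound $d(u_n, x_n) = (1 - \beta_n) d(u, x_n) \le 2K(1 - \beta_n)$.

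Next I would derive, exactly as in \cite{Che23}, a recursion of the form $s_{n+1} \le \beta_{n+1} s_n + \epsilon_n$, obtained by comparing the two convex combinations defining $x_{n+1}$ and $x_{n+2}$ through $(W2)$, $(W4)$ and nonexpansiveness; here $\epsilon_n$ is bounded by a fixed linear combination (coefficients involving $K$) of $\abs{\beta_n - \beta_{n+1}}$, $\abs{\lambda_n - \lambda_{n+1}}$ and $d(T_{n+1} u_n, T_n u_n)$, and hence, by $\qcondSumConseqBetaConvergent$, $\qcondSumConseqLambdaConvergent$ and $\qcondTnConseqCauchy$, the series $\sum_n \epsilon_n$ converges. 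Since we want a genuine rate of convergence (not merely a rate of metastability), I cannot leave the summable error as a ``$v_n$'' term of Lemma~\ref{lem:xu-metastable}; instead I set $v_n \defEq 0$, $r_n \defEq \sum_{j \ge n} \epsilon_j \to 0$, $a_n \defEq 1 - \beta_{n+1}$ and $w_n \defEq s_n + r_n$, so that the recursion rearranges into $w_{n+1} \le (1 - a_n)(w_n + v_n) + a_n r_n$, with $(w_n)$ bounded above by some $S \in \N$; one can take $S = 2K$. The hypothesis of Lemma~\ref{lem:xu-metastable} on $(v_i)$ is now vacuous, and the one on $(r_i)$ is verified by taking $n = \chi(3k+2)$: the Cauchy moduli $\rSumConseqBetaConvergent$, $\rSumConseqLambdaConvergent$, $\rTnConseqCauchy$ control the three tails at the scalings $8K(k+1)$, $8K(k+1)$, $2(k+1)$ built into $\chi$, forcing $r_i \le \tfrac{1}{3(k+1)}$ for all $i \ge \chi(3k+2)$. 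Because these hold for all indices beyond $\chi(3k+2)$ with $q$ arbitrary, the ``window'' conclusion of the lemma upgrades to an honest rate for $w_n \to 0$, hence for $s_n \to 0$ since $s_n \le w_n$. In case~(i), $\sum_n a_n = \sum_n (1 - \beta_{n+1})$ diverges with a monotone rate obtained from $\rSumOneMinusBetaDivergent$ by an index shift, and the first part of Lemma~\ref{lem:xu-metastable} yields $\Sigma(k) = \zeta(k, \chi(3k+2) + \dots)$, the ``$+2$'' absorbing the minor index/rounding discrepancies. In case~(ii), $1 - a_n$ is a shift of $\beta_n$, so $\qcondProdBetaZero$ supplies exactly a rate of convergence for $\prod_{n \ge m}(1 - a_n) = 0$, which is precisely what the second part of Lemma~\ref{lem:xu-metastable} requires, giving $\Sigma^*(k) = \zeta^*(k, \chi(3k+2))$.

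For the $(T_n)$-asymptotic regularity, assuming in addition $\qcondLimBetaOne$ and $\qcondLiminfBetaGtZero$, I would prove, for $n \ge N_\Lambda$, the pointwise bound $d(x_n, T_n x_n) \le (2 + \Lambda) d(u_n, x_n) + \Lambda s_n \le 2K(2 + \Lambda)(1 - \beta_n) + \Lambda s_n$, using $d(x_{n+1}, u_n) = \lambda_n d(u_n, T_n u_n)$ by $(W2)$ (hence $d(u_n, T_n u_n) \le \Lambda d(x_{n+1}, u_n)$ for $n \ge N_\Lambda$ by $\qcondLiminfBetaGtZero$), the triangle inequality $d(x_{n+1}, u_n) \le s_n + d(u_n, x_n)$, and nonexpansiveness of $T_n$. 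Feeding in the rate $\Sigma$ (resp.\ $\Sigma^*$) for $s_n$ and the rate $\rLimBetaOne$ for $\beta_n \to 1$ from $\qcondLimBetaOne$, and splitting $\tfrac{1}{k+1}$ into the two summands, yields $\wt{\Sigma}$ (resp.\ $\wt{\Sigma^*}$) after routine arithmetic producing the arguments $2\Lambda(k+1)$ and $4K\Lambda(k+1)$.

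There is no conceptual obstacle — the analytic content is inherited from \cite{Che23} and from Lemma~\ref{lem:xu-metastable} — so the work is essentially bookkeeping: deriving the error term $\epsilon_n$ in the recursion and propagating its constants, together with $S = 2K$, through the lemma so as to land on exactly $\Sigma$, $\wt{\Sigma}$, $\Sigma^*$, $\wt{\Sigma^*}$ (the scalings $8K(k+1)$, $3k+2$, $6K(k+1)$, $2\Lambda(k+1)$, $4K\Lambda(k+1)$ being dictated by the shape of the lemma), and checking that $\qcondProdBetaZero$ in its present formulation feeds correctly into the second part of Lemma~\ref{lem:xu-metastable} — which is the substance of the ``straightforward'' remark preceding the statement.
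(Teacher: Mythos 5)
Your overall plan (the recursive inequality from \cite{Che23} followed by a quantitative Xu-type lemma, then a pointwise estimate for the $(T_n)$-asymptotic regularity) is indeed the intended route: the paper simply reruns the proof of Theorem~3.5 of \cite{Che23}, feeding the summable error $\epsilon_n$ with Cauchy modulus $\chi$ into the \emph{full-rate} Lemmas~13 and~14 of \cite{Pin21}, and then argues as in Theorem~3.7 of \cite{Che23}. Your detour through the metastability Lemma~\ref{lem:xu-metastable} with $v_n:=0$ and $r_n:=\sum_{j\ge n}\epsilon_j$ is a reasonable idea, and the ``$q$ arbitrary'' upgrade is legitimate once $v_n\equiv 0$; but as executed it does not deliver the rates the theorem asserts, and in a proof-mining statement the explicit rates \emph{are} the theorem. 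Two concrete problems. First, Lemma~\ref{lem:xu-metastable} requires a global upper bound $S$ on the sequence it is applied to; your sequence is $w_n=s_n+r_n$, and $r_n$ is a tail of $\sum\epsilon_n$, which is not controlled by $K$ at all (it depends on the total variations of $(\beta_n)$, $(\lambda_n)$ and on $\sum_n d(T_{n+1}u_n,T_nu_n)$), so $S=2K$ is unavailable; even restricting attention to indices past $\chi(3k+2)$ only gives $w_i\le 2K+1$, which both breaks the black-box applicability of the lemma as stated and perturbs the constant $6K(k+1)$ in $\Sigma$ and $\Sigma^*$. Second, your choice $a_n=1-\beta_{n+1}$ introduces an index shift: for part (ii) you need a rate of convergence for $\prod_{n\ge m}(1-a_n)=\prod_{n\ge m+1}\beta_n$, and $(C1_q^*)$ supplies $\sigma^*(m+1,\cdot)$, not $\sigma^*(m,\cdot)$ --- monotonicity of $\sigma^*$ runs the wrong way here, since dropping the factor $\beta_m\le 1$ makes the product larger --- so you obtain $\sigma^*(\chi(3k+2)+1,\cdot)+1$ rather than the stated $\Sigma^*$. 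The shift (and hence this loss) is avoidable by splitting $d(u_{n+1},u_n)\le\abs{\beta_{n+1}-\beta_n}\,d(u,x_{n+1})+\beta_n\,d(x_{n+1},x_n)$, which gives the recursion with coefficient $\beta_n$, i.e.\ $a_n=1-\beta_n$, matching $(C1_q)$ and $(C1_q^*)$ with no adjustment.

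For the $(T_n)$-asymptotic regularity your estimate $d(x_n,T_nx_n)\le(2+\Lambda)d(u_n,x_n)+\Lambda s_n$ is too weak to yield $\wt{\Sigma}$: with $n\ge\Sigma(2\Lambda(k+1)-1)$ and $n\ge\eta(4K\Lambda(k+1)-1)$ it only gives $\frac{1}{2(k+1)}+\frac{2+\Lambda}{2\Lambda(k+1)}=\frac{\Lambda+1}{\Lambda(k+1)}>\frac{1}{k+1}$, so your constants would force $\eta(4K(2+\Lambda)(k+1)-1)$ instead of the stated $\eta(4K\Lambda(k+1)-1)$. The inequality actually needed (and used in \cite{Che23}) is $d(x_n,T_nx_n)\le\frac{1}{\lambda_n}\left(d(x_n,x_{n+1})+d(u_n,x_n)\right)\le\Lambda\left(d(x_n,x_{n+1})+d(u_n,x_n)\right)$ for $n\ge N_\Lambda$, obtained by writing $d(x_n,T_nx_n)\le d(x_n,x_{n+1})+d(x_{n+1},T_nu_n)+d(T_nu_n,T_nx_n)$ and using $d(x_{n+1},T_nu_n)=(1-\lambda_n)d(u_n,T_nu_n)$ together with $d(u_n,T_nu_n)=\frac{1}{\lambda_n}d(u_n,x_{n+1})$; combined with $d(u_n,x_n)=(1-\beta_n)d(u,x_n)\le 2K(1-\beta_n)$ this gives exactly $\wt{\Sigma}$ and $\wt{\Sigma^*}$. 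In short: the analytic skeleton is right, but to land on the stated $\Sigma$, $\Sigma^*$, $\wt{\Sigma}$, $\wt{\Sigma^*}$ you should use the $\beta_n$-coefficient splitting, the sharper $\Lambda$-estimate above, and either the full-rate lemmas of \cite{Pin21} as the paper does or a careful re-derivation that does not require a $K$-expressible global bound on $w_n$.
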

\begin{proof}
    For the asymptotic regularity results, 
    the proof is the same as that of \cite[Theorem~3.5]{Che23},
    using Lemmas 13 and 14 from \cite{Pin21}, respectively. 
    The $(T_n)$-asymptotic regularity then follows identically to \cite[Theorem~3.7]{Che23}.
\end{proof}


The following shows that, under certain conditions, one can also obtain  
rates of $T_m$-asymptotic regularity, for any mapping $T_m$ in the family.

\begin{theorem}\label{thm:Tn-ar-ar}
    Let $(\gamma_n) \subset (0, \infty)$ be a sequence satisfying $(C8_q)$, 
    furthermore bounded above by some $G \in \N$.
    Suppose that $(T_n)$ satisfies \eqref{eq:prePtwo} with respect to $(\gamma_n)$
    and let $\phi$ be a rate of $(T_n)$-asymptotic regularity for $(x_n)$.
    Then, for any $m \in \N$,
    $(x_n)$ is $T_m$-asymptotically regular with rate 
    \begin{align*}
        k \mapsto \max\set{\phi((1 + 2\Gamma G) (k + 1) - 1), N_\Gamma}.
    \end{align*}
\end{theorem}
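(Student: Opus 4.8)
The plan is to reduce $T_m$-asymptotic regularity to $(T_n)$-asymptotic regularity by a single application of the triangle inequality together with hypothesis \eqref{eq:prePtwo}, and then carry out the quantitative bookkeeping. Concretely, I would fix $m, k \in \N$, set $n_0 := \max\set{\phi((1 + 2\Gamma G)(k + 1) - 1), N_\Gamma}$, and show that $d(x_n, T_m x_n) \leq \frac{1}{k + 1}$ for every $n \geq n_0$.

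First, for such an $n$, the triangle inequality gives $d(x_n, T_m x_n) \leq d(x_n, T_n x_n) + d(T_n x_n, T_m x_n)$. To control the second summand I would invoke \eqref{eq:prePtwo} with $x := x_n$, obtaining $d(T_n x_n, T_m x_n) \leq \frac{\abs{\gamma_m - \gamma_n}}{\gamma_n} d(T_n x_n, x_n)$. Since $(\gamma_n)$ takes values in $(0, G]$, we have $\abs{\gamma_m - \gamma_n} \leq \gamma_m + \gamma_n \leq 2G$, and since $n \geq N_\Gamma$, condition $(C8_q)$ yields $\gamma_n \geq \frac{1}{\Gamma}$, so $\frac{1}{\gamma_n} \leq \Gamma$; hence $\frac{\abs{\gamma_m - \gamma_n}}{\gamma_n} \leq 2\Gamma G$ and therefore $d(x_n, T_m x_n) \leq (1 + 2\Gamma G)\, d(x_n, T_n x_n)$.

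Finally, since $n \geq \phi((1 + 2\Gamma G)(k + 1) - 1)$ and $\phi$ is a rate of $(T_n)$-asymptotic regularity for $(x_n)$, we get $d(x_n, T_n x_n) \leq \frac{1}{(1 + 2\Gamma G)(k + 1)}$, so $d(x_n, T_m x_n) \leq \frac{1}{k + 1}$, which is exactly the assertion. Note that the value $\gamma_m$ (for the fixed but arbitrary $m$, which need not exceed $N_\Gamma$) enters only through the uniform upper bound $G$, so no lower bound on $\gamma_m$ is required.

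Since the whole argument merely chains two inequalities, there is essentially no obstacle; the only points requiring a little care are ensuring that the lower bound $\gamma_n \geq \frac{1}{\Gamma}$ is genuinely available — which is why the maximum with $N_\Gamma$ appears in the stated rate — and correctly tracking the ``$-1$'' shift induced by the $\frac{1}{k + 1}$ convention when composing with $\phi$.
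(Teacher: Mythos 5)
Your proof is correct and follows essentially the same route as the paper: triangle inequality, then \eqref{eq:prePtwo} with $x := x_n$, bounding $\frac{\abs{\gamma_m - \gamma_n}}{\gamma_n} \leq \Gamma(\gamma_m + \gamma_n) \leq 2\Gamma G$ using $n \geq N_\Gamma$ and the upper bound $G$, and finally the defining property of $\phi$ at $(1 + 2\Gamma G)(k + 1) - 1$. No differences worth noting beyond your (correct) remark that $\gamma_m$ only needs the upper bound.
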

\begin{proof}
    Let $k \geq \max\set{\phi((1 + 2\Gamma G) (k + 1) - 1), N_\Gamma}$. 
    We have that 
    \begin{align*}
        d(x_n, T_m x_n) \leq&\ d(x_n, T_n x_n) + d(T_n x_n, T_m x_n) \\ 
        \leq&\ d(x_n, T_n x_n) + \frac{\abs{\gamma_m - \gamma_n}}{\gamma_n} d(T_n x_n, x_n) \quad\text{by \eqref{eq:prePtwo}} \\ 
        \leq&\ d(x_n, T_n x_n) + \Gamma {\abs{\gamma_m - \gamma_n}} d(T_n x_n, x_n)  \\
        \leq&\ d(x_n, T_n x_n) + \Gamma (\abs{\gamma_m} + \abs{\gamma_n}) d(T_n x_n, x_n) \\ 
        \leq&\ (1 + 2 \Gamma G) d(T_n x_n, x_n) \quad\text{since $\abs{\gamma_n}, \abs{\gamma_m} \leq G$}\\ 
        \leq&\  (1 + 2 \Gamma G) \frac{1}{(1 + 2 \Gamma G) (k + 1)} = \frac{1}{k + 1} 
    \end{align*}
    where the last inequality holds since $n \geq \phi((1 + 2\Gamma G) (k + 1) - 1)$.
\end{proof}

Combining this with the previous theorem, we obtain the following.
\begin{theorem}\label{thm:Tm-ar}
    Let $(X, d, W)$ be a $W$-hyperbolic space, $(T_n : X \to X)$ be a family of nonexpansive mappings 
    and let $(x_n)$ be the sequence generated by \eqref{eq:tmf}. 
    Let $(\gamma_n)$ be a sequence of positive reals such that $(T_n)$ satisfies \eqref{eq:prePtwo} 
    with respect to $(\gamma_n)$, and assume conditions $(C2_q)$ -- $(C6_q)$ and $(C8_q)$ hold and that $G \in \N$ is an upper bound on $(\gamma_n)$
    \begin{enumerate}
        \item If $\qcondSumOneMinusBetaDivergent$ is satisfied, then for any $m \in \N$, $(x_n)$ is $T_m$-asymptotically regular with rate 
        \begin{align}
            \Psi(k) = \max\set{\wt{\Sigma}((1 + 2\Gamma G) (k + 1) - 1), N_\Gamma},
        \end{align}
        where $\wt{\Sigma}$ is defined as in Theorem \ref{thm:Tn-ar-ar}.(i).

        \item Analogously, if $\qcondProdBetaZero$ is satisfied, then for any $m \in \N$, $(x_n)$ is $T_m$-asymptotically regular with rate 
        \begin{align}
            \Psi^*(k) = \max\set{\wt{\Sigma^*}((1 + 2\Gamma G) (k + 1) - 1), N_\Gamma},
        \end{align}
        where $\wt{\Sigma^*}$ is defined as in Theorem \ref{thm:Tn-ar-ar}.(ii).
    \end{enumerate}
\end{theorem}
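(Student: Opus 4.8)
The plan is to obtain the statement by composing Theorem~\ref{thm:Tn-ar-ar} with the previous theorem, requiring no further computation. First I would check that the hypotheses assumed here are precisely those under which the previous theorem supplies a rate of $(T_n)$-asymptotic regularity for $(x_n)$: in case (i), conditions $(C2_q)$, $(C3_q)$, $(C6_q)$ together with $\qcondSumOneMinusBetaDivergent$ yield asymptotic regularity of $(x_n)$ with rate $\Sigma$, and the further conditions $(C4_q)$, $(C5_q)$ promote this to $(T_n)$-asymptotic regularity with rate $\wt{\Sigma}$; in case (ii), the same argument with $\qcondProdBetaZero$ replacing $\qcondSumOneMinusBetaDivergent$ produces the rate $\wt{\Sigma^*}$. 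Hence $\wt{\Sigma}$ (resp. $\wt{\Sigma^*}$) is a rate of $(T_n)$-asymptotic regularity for $(x_n)$.

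Next I would apply Theorem~\ref{thm:Tn-ar-ar} with this rate playing the role of $\phi$. Its remaining hypotheses are all assumed in the present statement: $(\gamma_n) \subset (0, \infty)$ satisfies $(C8_q)$, is bounded above by $G \in \N$, and $(T_n)$ satisfies \eqref{eq:prePtwo} with respect to $(\gamma_n)$. Taking $\phi := \wt{\Sigma}$ gives, for every $m \in \N$, that $(x_n)$ is $T_m$-asymptotically regular with rate
\[
    k \mapsto \max\set{\wt{\Sigma}((1 + 2\Gamma G)(k + 1) - 1), N_\Gamma} = \Psi(k),
\]
where $\Gamma$ and $N_\Gamma$ are the constants furnished by $(C8_q)$; this is part (i). Taking $\phi := \wt{\Sigma^*}$ instead yields the rate $\Psi^*$ of part (ii) in exactly the same way.

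As both theorems being composed have already been established, there is no genuine obstacle here; the only thing requiring care is bookkeeping — ensuring that the constants $\Gamma$, $N_\Gamma$, $G$ used in Theorem~\ref{thm:Tn-ar-ar} are consistently those fixed by $(C8_q)$ and the stated upper bound on $(\gamma_n)$, and that the parameters $M$ and $K$ entering $\wt{\Sigma}$ and $\wt{\Sigma^*}$ through the previous theorem are the ones fixed at the beginning of this section for the common fixed point $p$.
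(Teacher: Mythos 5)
Your proposal is correct and matches the paper's (implicit) argument exactly: the paper states this theorem as an immediate combination of the preceding asymptotic regularity theorem (supplying $\wt{\Sigma}$, resp.\ $\wt{\Sigma^*}$, as a rate of $(T_n)$-asymptotic regularity) with Theorem \ref{thm:Tn-ar-ar} applied with $\phi := \wt{\Sigma}$, resp.\ $\phi := \wt{\Sigma^*}$. Your bookkeeping of the constants $\Gamma$, $N_\Gamma$, $G$, $K$ is also consistent with the paper.
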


We can now proceed to prove our main metastability result. 
The proofs are essentially an adaptation of those in \cite{DinPin21,DinPin23}.
A general account in the context of proof mining for this type of techniques, 
first used by Kohlenbach \cite{Koh11}, is given by Ferreira, Leu\c{s}tean and Pinto in \cite{FerLeuPin19}.

For any $k \geq 1$, let us denote by 
\begin{align*}
    \afixb{k}{} = \set{x \in X \mid \forall n \in \N \left(d(x, T_n x) \leq \frac{1}{k}\right)} \cap B_p(K)    
\end{align*}
the set of common $\frac{1}{k}$-approximate fixed points of $(T_n)$ 
which also belong to the closed ball $B_p(K)$ of radius $K$ and center $p$. 
We first prove some recursive inequalities involving the sequence $(x_n)$,
which are essentially a generalization to our setting of those used for \cite[Theorem~3.6]{DinPin21}.

\begin{proposition}
    Let $x \in X$ be any point. 
    Then, for all $n \in \N$,
    \begin{enumerate}
        \item $d(x_{n + 1}, x) \leq d(u_n, x) + d(x, T_n x)$;
        \item $d^2(u_n, x) \leq \beta_n d^2(x_n, x) + 2\beta_n(1 - \beta_n)\ql{x}{u}{x}{x_n} + (1 - \beta_n)^2 d^2(x, u)$;
        \item $d^2(x_{n + 1}, x) \leq \beta_n(d^2(x_n, x) + B(n) w_n) + (1 - \beta_n) (2\beta_n\ql{x}{u}{x}{x_n}) + (1 - \beta_n)d^2(x, u)$,
    \end{enumerate}
    where $w_n := 2d(u_n, x)d(T_n x, x) + d^2(T_n x, x)$.
\end{proposition}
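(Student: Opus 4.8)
The plan is to establish the three inequalities in sequence, each feeding into the next, using only axiom (W1), the inequality \eqref{eq:cn-plus}, nonexpansiveness of the $T_n$, and condition $\qcondBetaGtZero$.

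For (i), I would start from the recursion $x_{n+1} = (1 - \lambda_n) u_n + \lambda_n T_n u_n$ in \eqref{eq:tmf} and apply (W1) with the point $x$, obtaining $d(x_{n+1}, x) \leq (1 - \lambda_n) d(u_n, x) + \lambda_n d(T_n u_n, x)$. Then I would bound $d(T_n u_n, x) \leq d(T_n u_n, T_n x) + d(T_n x, x) \leq d(u_n, x) + d(x, T_n x)$ using that $T_n$ is nonexpansive, so that the right-hand side becomes $d(u_n, x) + \lambda_n d(x, T_n x) \leq d(u_n, x) + d(x, T_n x)$, since $\lambda_n \leq 1$.

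For (ii), I would apply \eqref{eq:cn-plus} to the convex combination $u_n = (1 - \beta_n) u + \beta_n x_n$ with $z = x$, which gives $d^2(u_n, x) \leq (1 - \beta_n) d^2(x, u) + \beta_n d^2(x, x_n) - \beta_n(1 - \beta_n) d^2(u, x_n)$. It then remains to check that this bound is no larger than the claimed one: the terms $\beta_n d^2(x_n, x)$ cancel, and after moving $(1 - \beta_n)^2 d^2(x, u)$ across and (when $\beta_n \in (0,1)$) dividing by $\beta_n(1 - \beta_n)$, the inequality reduces to $d^2(x, u) - d^2(u, x_n) \leq 2\ql{x}{u}{x}{x_n}$; expanding the right-hand side through Definition \ref{dfn:quasilinearization} turns this into $0 \leq d^2(x, x_n)$. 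The boundary cases $\beta_n = 0$ and $\beta_n = 1$ are immediate, since then $u_n$ equals $u$ or $x_n$ and the claimed bound becomes trivial.

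For (iii), I would square (i) to get $d^2(x_{n+1}, x) \leq d^2(u_n, x) + 2 d(u_n, x) d(T_n x, x) + d^2(T_n x, x) = d^2(u_n, x) + w_n$, and then substitute the bound from (ii). What remains is to absorb the two surplus terms: $(1 - \beta_n)^2 d^2(x, u) \leq (1 - \beta_n) d^2(x, u)$ because $\beta_n \in [0,1]$, and $w_n \leq \beta_n B(n) w_n$ because $w_n \geq 0$ and $\beta_n B(n) \geq 1$ by condition $\qcondBetaGtZero$. I do not anticipate any genuine obstacle; the only step needing care is the algebraic bookkeeping in (ii) — tracking exactly which points occupy which slot of the convex combination and of the quasilinearization bracket — but once \eqref{eq:cn-plus} and Definition \ref{dfn:quasilinearization} are written out, everything collapses to the nonnegativity of a squared distance.
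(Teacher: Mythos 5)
Your proposal is correct and follows essentially the same route as the paper's proof: (W1) plus nonexpansiveness for (i), the CN${}^+$ inequality \eqref{eq:cn-plus} combined with the definition of the quasilinearization reducing everything to $0 \leq d^2(x, x_n)$ for (ii), and squaring (i), inserting (ii), and invoking $\qcondBetaGtZero$ for (iii). The only differences are cosmetic bookkeeping: the paper shows the difference of the two sides of (ii) is bounded by $-\beta_n(1-\beta_n)d^2(x,x_n) \leq 0$ rather than dividing by $\beta_n(1-\beta_n)$, and in (iii) it writes $w_n = \beta_n \cdot \frac{w_n}{\beta_n} \leq \beta_n B(n) w_n$ instead of your equivalent multiplication $w_n \leq \beta_n B(n) w_n$.
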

\begin{proof}
    Let $x \in X$ and $n \in \N$. We have
    \begin{enumerate}
        \item
        \begin{align*}
            d(x_{n + 1}, x) \leq&\ (1 - \lambda_n) d(u_n, x) + \lambda_n d(T_n u_n, x) \quad\text{by (W1)} \\ 
            \leq&\ (1 - \lambda_n) d(u_n, x) + \lambda_n d(T_n u_n, T_n x) + \lambda_n d(T_n x, x) \\ 
            \leq&\ (1 - \lambda_n) d(u_n, x) + \lambda_n d(u_n, x) + d(T_n x, x) \quad\text{by nonexpansiveness} \\ 
            =&\ d(u_n, x) + d(T_n x, x) 
        \end{align*}

        \item 
        \begin{align*}
            & d^2(u_n, x) - \beta_n d^2(x_n, x) + 2\beta_n(1 - \beta_n)\ql{x}{u}{x}{x_n} + (1 - \beta_n)^2 d^2(x, u) \\
            =&\ d^2(u_n, x) - \beta_n d^2(x_n, x) - \beta_n(1 - \beta_n) \beta_n(1 - \beta_n)(d^2(x, x_n) + d^2(u, x) - d^2(u, x_n)) - (1 - \beta_n)^2 d^2(x, u) \\ 
            &\quad\text{by the definition of $\ql{\cdot}{\cdot}{\cdot}{\cdot}$} \\ 
            \leq&\ (1 - \beta_n)d^2(u, x) + \beta_n d^2(x_n, x) - \beta_n(1- \beta_n)d^2(u, x_n) \quad\text{by \eqref{eq:cn-plus}} \\ 
            &- \beta_n(1 - \beta_n) \beta_n(1 - \beta_n)(d^2(x, x_n) + d^2(u, x) - d^2(u, x_n)) - (1 - \beta_n)^2 d^2(x, u) \\ 
            =&\ (1 - \beta_n) d^2(u, x) - \beta_n(1 - \beta_n)(d^2(x, x_n) + d^2(u, x)) - (1 - \beta_n)^2 d^2(x, u) \\ 
            =&\ \beta_n (1 - \beta_n) d^2(u, x) - \beta_n(1 - \beta_n)(d^2(x, x_n) + d^2(u, x)) \\ 
            =&\ \beta_n (1 - \beta_n)(d^2(u, x) - d^2(x, x_n) - d^2(u, x)) \\ 
            =&\ \beta_n(1 - \beta_n) (- d^2(x, x_n)) \\ 
            \leq&\ 0
        \end{align*}

        \item 
        \begin{align*}
            d^2(x_{n + 1}, x) \leq&\ (d(u_n, x) + d(T_n x, x))^2 \quad\text{by (i)} \\
            =&\ d^2(u_n, x) + 2d(u_n, x)d(T_n x, x) + d^2(T_n x, x) \\
            =&\ d^2(u_n, x) + w_n \\ 
            \leq&\ \beta_n d^2(x_n, x) + 2\beta_n(1 - \beta_n)\ql{x}{u}{x}{x_n} + (1 - \beta_n)^2 d^2(x, u) + w_n \quad\text{by (ii)} \\ 
            =&\ \beta_n d^2(x_n, x) + (1 - \beta_n) (2\beta_n\ql{x}{u}{x}{x_n} + (1 - \beta_n) d^2(x, u)) + w_n \\ 
            =&\ \beta_n (d^2(x_n, x) + \frac{w_n}{\beta_n}) + (1 - \beta_n) (2\beta_n\ql{x}{u}{x}{x_n} + (1 - \beta_n) d^2(x, u)) \\ 
            \leq&\ \beta_n (d^2(x_n, x) + B(n) {w_n}) + (1 - \beta_n) (2\beta_n\ql{x}{u}{x}{x_n} + (1 - \beta_n) d^2(x, u)),
        \end{align*}
        where the last inequality uses Condition $\qcondBetaGtZero$.
    \end{enumerate}
\end{proof}

The following lemma is analogous to \cite[Proposition~4.1]{DinPin23} (which in turn generalizes \cite[Proposition~3.1]{FerLeuPin19} to CAT(0) spaces)
and the proof is easily adapted. 
We give it here for completeness.
    
\begin{lemma}\label{prop:tkm:metastability:afix-pojection}
    Let $k \in \N$ and $f : \N \to \N$ be a monotone function and set  
    $r(k) := K^2(k + 1)$. 
    There exists $N \leq f^{(r(k))}(0)$ and $x \in \afixb{f(N) + 1}{}$ such that
    \begin{align*}
        \forall y \in \afixb{N + 1}{} \left(d^2(x, u) \leq d^2(y, u) + \frac{1}{k + 1}\right).
    \end{align*}
\end{lemma}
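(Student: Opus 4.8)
The plan is to run the ``$\varepsilon$-projection'' (finitary projection) argument of Ferreira, Leu\c{s}tean and Pinto \cite{FerLeuPin19}, in the CAT(0) form used by Dinis and Pinto \cite{DinPin23}: one replaces the metric projection of $u$ onto the common fixed point set by a pigeonhole argument over the nested family of approximate common fixed point sets $\afixb{N + 1}{}$. Since we may assume $f$ is monotone, the iterates $0 = f^{(0)}(0) \le f^{(1)}(0) \le \cdots$ are nondecreasing, so $f^{(j)}(0) \le f^{(r(k))}(0)$ for every $j \le r(k)$; this will immediately take care of the bound on $N$.

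First I would introduce the auxiliary quantity $a_N \defEq \inf\set{d^2(y, u) \mid y \in \afixb{N + 1}{}}$, for $N \in \N$. Since $p$ is a common fixed point, $d(p, T_n p) = 0$ for all $n$ and $d(p, p) = 0 \le K$, so $p \in \afixb{N + 1}{}$; hence each $a_N$ is well defined and $0 \le a_N \le d^2(p, u) \le M^2 \le K^2$. Moreover $\afixb{N + 2}{} \subseteq \afixb{N + 1}{}$, so $(a_N)$ is nondecreasing, and therefore the $r(k) + 1 = K^2(k + 1) + 1$ values $a_{f^{(0)}(0)} \le a_{f^{(1)}(0)} \le \cdots \le a_{f^{(r(k))}(0)}$ all lie in $[0, K^2]$.

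The key step is an averaging argument on these values: the $r(k)$ consecutive gaps $a_{f^{(i + 1)}(0)} - a_{f^{(i)}(0)} \ge 0$ sum to $a_{f^{(r(k))}(0)} - a_{f^{(0)}(0)} \le K^2 = \frac{r(k)}{k + 1}$, so the smallest of them is $\le \frac{1}{k + 1}$. If it is strictly smaller than $\frac{1}{k + 1}$, let $j < r(k)$ achieve it and set $N \defEq f^{(j)}(0)$; then $f(N) = f^{(j + 1)}(0) \ge N$ and $a_{f(N)} < a_N + \frac{1}{k + 1}$, and I pick $x \in \afixb{f(N) + 1}{}$ to be an $\varepsilon$-approximate minimizer of $d^2(\cdot, u)$ with $\varepsilon > 0$ small enough that $d^2(x, u) \le a_{f(N)} + \varepsilon \le a_N + \frac{1}{k + 1}$. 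Otherwise all $r(k)$ gaps equal $\frac{1}{k + 1}$, which forces $a_{f^{(0)}(0)} = 0$ and $a_{f^{(r(k))}(0)} = K^2$, hence $M^2 = K^2$; taking $j = r(k) - 1$ and $N \defEq f^{(r(k) - 1)}(0)$ gives $a_{f(N)} = a_{f^{(r(k))}(0)} = K^2 = a_N + \frac{1}{k + 1}$, and here I take $x = p$, for which $d^2(p, u) \le M^2 = a_N + \frac{1}{k + 1}$. In both cases $N \le f^{(r(k))}(0)$ and $x \in \afixb{f(N) + 1}{}$, and for every $y \in \afixb{N + 1}{}$ the definition of $a_N$ gives $a_N \le d^2(y, u)$, so $d^2(x, u) \le d^2(y, u) + \frac{1}{k + 1}$, as required.

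The step I expect to require the most care — and the reason for the case split, rather than simply ``take a minimizer'' — is that $\afixb{f(N) + 1}{}$ need not be compact (nor is it obviously geodesically convex, since $x \mapsto d(x, T_n x)$ need not be convex along geodesics), so the infimum $a_{f(N)}$ may fail to be attained. One must therefore extract a \emph{strict} gap from the averaging argument in order to leave room for an $\varepsilon$-approximate minimizer, and deal with the single borderline configuration (all gaps exactly $\frac{1}{k+1}$) separately, using the common fixed point $p$ together with the a priori bound $d^2(p, u) \le M^2$. Everything else — nonemptiness of $\afixb{N + 1}{}$ via $p$, the monotonicity and the bound $K^2$ for $(a_N)$, and $f^{(j)}(0) \le f^{(r(k))}(0)$ — is routine.
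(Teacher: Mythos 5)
Your proposal is correct, but it proves the lemma by a genuinely different route than the paper. The paper argues by contradiction: assuming the conclusion fails for every admissible $N$ and $x$, it recursively builds points $v_0 = p, v_1, \hdots, v_{r(k) + 1}$, each a common approximate fixed point at precision $\frac{1}{f^{(r(k) - j + 1)}(0) + 1}$, with $d^2(v_{j + 1}, u) \leq d^2(v_j, u) - \frac{1}{k + 1}$ at every step; after $r(k) + 1 = K^2(k + 1) + 1$ such drops the value $d^2(v_{r(k) + 1}, u)$ would be negative, a contradiction. You instead argue directly, via the infima $a_N = \inf\set{d^2(y, u) \mid y \in \afixb{N + 1}{}}$: these are well defined (each set contains $p$), nondecreasing and bounded by $K^2$, so pigeonholing over the $r(k)$ gaps along the iterates $f^{(i)}(0)$ produces a gap of size at most $\frac{1}{k + 1}$, and you correctly handle the non-attainment of the infimum by extracting a strict gap and taking an $\epsilon$-approximate minimizer, with the borderline configuration (all gaps exactly $\frac{1}{k + 1}$, forcing $d^2(p, u) = K^2$) treated separately via $x = p$. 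Both arguments rest on the same counting ($r(k) = K^2(k + 1)$ many $\frac{1}{k + 1}$-steps exhaust the budget $K^2$) and yield the same bound $f^{(r(k))}(0)$. What the paper's contradiction-style presentation buys is that the negated conclusion already supplies, for each $x$, a $y$ whose value drops by strictly more than $\frac{1}{k + 1}$, so one never needs to invoke the (ineffective) infima, $\epsilon$-minimizers, or your case split; this finitary phrasing is the customary one in proof mining. Your direct version is closer to the classical ``approximate projection'' argument and is perfectly adequate here, since the lemma is an existence statement with an explicit bound. One marginal point: your pigeonhole presupposes $r(k) \geq 1$; in the degenerate case $K = 0$ (so $u = p$) the claim is trivially witnessed by $N = 0$ and $x = p$, so this costs only a one-line remark.
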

\begin{proof}
    We argue by contradiction. Let therefore $k \in \N$ and $f : \N \to \N$ monotone be such that,
    for all $N \leq f^{(r(k))}(0)$ and all $x \in \afixb{f(N) + 1}{}$,
    there exists $y \in \afixb{N + 1}{}$ such that 
    \begin{align}
        d^2(x, u) > d^2(y, u) + \frac{1}{k + 1}. \label{eq:tkm:metastability:afix-pojection-negated-conclusion}
    \end{align}
    We recursively define a finite sequence $v_0, \hdots, v_{r(k)}, v_{r(k) + 1}$,
    with the properties that
    \begin{align}
        \forall j \in [0, r(k) + 1] \forall n \in \N \left(d(v_j, T_n v_j) \leq \frac{1}{f^{(r(k) - j + 1)}(0) + 1}\right) \label{eq:tkm:metastability:afix-pojection:rec-cond1}
    \end{align}
    and that 
    \begin{align}
        \forall j \in [0, r(k)] \left(d^2(v_{j + 1}, u) \leq d^2(v_j, u) - \frac{1}{k + 1}\right), \label{eq:tkm:metastability:afix-pojection:rec-cond2}
    \end{align}
    as follows: \\ 
    $\ul{v_0}$: \\ 
    Choose $v_0 := p$. Condition \eqref{eq:tkm:metastability:afix-pojection:rec-cond1} is clearly satisfied as $p \in \fix{T_n}$ for any $n \in \N$.

    \noindent$\ul{\text{$v_{j + 1}$, for $j \leq r(k)$}}$: \\ 
    We have some $v_j$ satisfying \eqref{eq:tkm:metastability:afix-pojection:rec-cond1} and \eqref{eq:tkm:metastability:afix-pojection:rec-cond2}.
    Set $v_{j + 1}$ to be the $y$ given by \eqref{eq:tkm:metastability:afix-pojection-negated-conclusion}
    for $N := f^{(r(k) - j)}(0)$ and $x := v_j$. Note that $f^{(r(k) - j)}(0) \leq f^{(r(k))}(0)$ by the monotonicity of $f$.
    Conditions \eqref{eq:tkm:metastability:afix-pojection:rec-cond1} and \eqref{eq:tkm:metastability:afix-pojection:rec-cond2}
    are then satisfied due to \eqref{eq:tkm:metastability:afix-pojection-negated-conclusion}.
    Using property \eqref{eq:tkm:metastability:afix-pojection:rec-cond2} of the sequence $(v_j)$, 
    we obtain the following contradiction: 
    \begin{align*}
        d^2(v_{r(k) + 1}, u) 
        &\leq\ d^2(v_0, u) - \frac{r(k) + 1}{k + 1} \\ 
        &=\ d^2(p, u) - \frac{r(k) + 1}{k + 1} \quad\text{since $K \geq M$}\\
        &\leq\ K^2 - \frac{K^2(k + 1) + 1}{k + 1} = \frac{-1}{k + 1} < 0.
    \end{align*}
\end{proof}

The following is an adaptation to our case of \cite[Lemma~4.2]{DinPin23}, which in turn generalizes from Hilbert to CAT(0) spaces 
Lemmas 2.3 and 2.7 from \cite{Koh11}. 
The proof for our countable mappings case remains essentially the same, and we omit it here.

\begin{lemma}\label{prop:tmf:metastability:afix-convex}
    For any $k \in \N$ and all $v_1, v_2 \in \ballc{p}{K}$, we have that
    \begin{align*}
        \forall n \in \N \left(\bigwedge\limits_{i = 1}^2 d(v_i, T_n v_i) < \frac{1}{\omega_1(k)}\right)
            \to 
        \forall n \in \N \forall t \in [0, 1] \left(d(w_t, T_n w_t) < \frac{1}{k + 1}\right),
    \end{align*}
    where $\omega_1(k) = 24K(k + 1)^2$ and $w_t = (1 - t) v_1 + t v_2$.
\end{lemma}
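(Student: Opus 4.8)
The plan is to bound $d^2(w_t, T_n w_t)$ directly, using the squared-distance inequality \eqref{eq:cn-plus} rather than the metric convexity axiom (W1). Fix $n \in \N$ and $t \in [0,1]$, write $\epsilon := \frac{1}{\omega_1(k)} = \frac{1}{24K(k+1)^2}$ (we may assume $K \geq 1$), and assume $d(v_i, T_n v_i) < \epsilon$ for $i = 1, 2$. First I would record two elementary facts: (a) $w_t \in \ballc{p}{K}$ by (W1), hence $d(w_t, v_i) \leq d(w_t, p) + d(p, v_i) \leq 2K$; and (b) by nonexpansiveness of $T_n$ together with the hypothesis, $d(T_n w_t, v_i) \leq d(T_n w_t, T_n v_i) + d(T_n v_i, v_i) < d(w_t, v_i) + \epsilon$ for $i = 1, 2$.

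The key step is to apply \eqref{eq:cn-plus} with $z := T_n w_t$, $x := v_1$, $y := v_2$ and $\lambda := t$, so that $(1-\lambda) x + \lambda y = w_t$:
\begin{align*}
    d^2(w_t, T_n w_t) \leq (1-t)\, d^2(T_n w_t, v_1) + t\, d^2(T_n w_t, v_2) - t(1-t)\, d^2(v_1, v_2).
\end{align*}
Into this I would substitute (b), expanding $(d(w_t, v_i) + \epsilon)^2 = d^2(w_t, v_i) + 2\epsilon\, d(w_t, v_i) + \epsilon^2$ and using (a) to bound $2\epsilon\, d(w_t, v_i) \leq 4K\epsilon$, which gives
\begin{align*}
    d^2(w_t, T_n w_t) \leq (1-t)\, d^2(w_t, v_1) + t\, d^2(w_t, v_2) - t(1-t)\, d^2(v_1, v_2) + 4K\epsilon + \epsilon^2.
\end{align*}
By (W2) one has $d(w_t, v_1) = t\, d(v_1, v_2)$ and $d(w_t, v_2) = (1-t)\, d(v_1, v_2)$, so $(1-t)\, d^2(w_t, v_1) + t\, d^2(w_t, v_2) = t(1-t)\, d^2(v_1, v_2)$; the first three terms thus cancel, leaving $d^2(w_t, T_n w_t) \leq 4K\epsilon + \epsilon^2$.

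Finally I would plug in $\epsilon$: since $4K\epsilon = \frac{1}{6(k+1)^2}$ and $\epsilon^2 = \frac{1}{576 K^2 (k+1)^4} \leq \frac{1}{576(k+1)^2}$, we get $d^2(w_t, T_n w_t) \leq \frac{97}{576(k+1)^2} < \frac{1}{(k+1)^2}$, hence $d(w_t, T_n w_t) < \frac{1}{k+1}$, as required.

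The one point worth flagging is why (W1) alone is insufficient: estimating $d(w_t, T_n w_t) \leq (1-t)\, d(v_1, T_n w_t) + t\, d(v_2, T_n w_t)$ and then applying nonexpansiveness yields only $d(w_t, T_n w_t) < \epsilon + 2t(1-t)\, d(v_1, v_2)$, which need not shrink with $k$ because $d(v_1, v_2)$ can be of order $K$. Passing to squared distances via \eqref{eq:cn-plus} is exactly what forces the $d^2(v_1, v_2)$ terms to cancel. The remainder is just bookkeeping of constants, and the constant $24$ in $\omega_1(k) = 24K(k+1)^2$ is tuned precisely so that the residual $4K\epsilon + \epsilon^2$ ends up below $\frac{1}{(k+1)^2}$.
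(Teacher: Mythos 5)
Your proof is correct, and it is essentially the argument the paper defers to (the paper omits the proof, citing \cite[Lemma~4.2]{DinPin23} and \cite{Koh11}): nonexpansiveness gives $d(T_n w_t, v_i) < d(w_t, v_i) + \frac{1}{\omega_1(k)}$, and the inequality \eqref{eq:cn-plus} applied with $z := T_n w_t$ makes the $d^2(v_1,v_2)$-terms cancel via (W2), leaving only the small residual, exactly as you compute. The constant bookkeeping ($4K\epsilon + \epsilon^2 < \frac{1}{(k+1)^2}$ for $\epsilon = \frac{1}{24K(k+1)^2}$, using $K \geq 1$) checks out, so nothing is missing.
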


The next result follows as in \cite[Lemma~4.3]{DinPin23}.
\begin{lemma}\label{prop:tmf:metastability:proj-to-proj-variational}
    For all $k \in \N$, $x, y \in \ballc{p}{K}$ and $u \in X$,
    \begin{align*}
        \forall t \in [0, 1] \left(d^2(x, u) \leq d^2(w_t, u) + \frac{1}{\omega_2(k)}\right) \to \ql{x}{u}{x}{y} \leq \frac{1}{k + 1},
    \end{align*}
    where $\omega_2(k) = 4K^2(k + 1)^2$ and $w_t = (1 - t) x + t y$.
\end{lemma}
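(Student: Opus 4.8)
The plan is to deduce the variational-type inequality from the near-minimality hypothesis by a single application of the $\operatorname{CN}^+$ inequality \eqref{eq:cn-plus}, followed by an optimization in the parameter $t$. First I note that if $K = 0$ then necessarily $x = y = p$, so $\ql{x}{u}{x}{y} = 0$ and there is nothing to prove; hence I may assume $K \geq 1$. Fix $x, y \in \ballc{p}{K}$ and $u \in X$, assume the antecedent of the implication, and set $t := \dfrac{1}{4K^2(k+1)}$, which lies in $(0, 1]$ since $4K^2(k+1) \geq 4$.

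Next I would apply \eqref{eq:cn-plus} with $z := u$ and this parameter $t$ to the point $w_t = (1-t)x + ty$, obtaining
\[
    d^2(u, w_t) \leq (1 - t) d^2(u, x) + t\, d^2(u, y) - t(1 - t) d^2(x, y).
\]
Feeding the antecedent $d^2(x, u) \leq d^2(w_t, u) + \dfrac{1}{\omega_2(k)}$ into the left-hand side and then subtracting $(1 - t) d^2(u, x)$ from both sides reduces this to $t\, d^2(x, u) \leq t\, d^2(u, y) - t(1 - t) d^2(x, y) + \dfrac{1}{\omega_2(k)}$. Dividing by $t > 0$, rearranging, and recalling from Definition \ref{dfn:quasilinearization} that $2\ql{x}{u}{x}{y} = d^2(x, y) + d^2(x, u) - d^2(u, y)$, I arrive at
\[
    2\ql{x}{u}{x}{y} \leq t\, d^2(x, y) + \frac{1}{t\,\omega_2(k)}.
\]

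Finally, since $x, y \in \ballc{p}{K}$ we have $d(x, y) \leq d(x, p) + d(p, y) \leq 2K$, so $d^2(x, y) \leq 4K^2$ and the right-hand side is at most $4K^2 t + \dfrac{1}{t\,\omega_2(k)}$. With the chosen value $t = \dfrac{1}{4K^2(k+1)}$ and $\omega_2(k) = 4K^2(k+1)^2$, both summands equal $\dfrac{1}{k+1}$, yielding $2\ql{x}{u}{x}{y} \leq \dfrac{2}{k+1}$, which is the claim. There is no substantive obstacle in this argument: the only point that requires care is the choice of $t$, which must balance the term $4K^2 t$ against $\dfrac{1}{t\,\omega_2(k)}$ while remaining in $[0, 1]$, and the constant $\omega_2(k)$ in the statement is calibrated precisely so that this balanced value of $t$ is admissible and produces the clean bound $\dfrac{1}{k+1}$.
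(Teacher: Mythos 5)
Your proof is correct: instantiating the hypothesis at the single parameter $t = \tfrac{1}{4K^2(k+1)}$, applying \eqref{eq:cn-plus} with $z := u$, and using $2\ql{x}{u}{x}{y} = d^2(x,y) + d^2(x,u) - d^2(u,y)$ together with $d(x,y) \leq 2K$ gives exactly the bound $\tfrac{1}{k+1}$, and your separate treatment of $K = 0$ is a harmless extra precaution. The paper omits the argument and simply defers to \cite[Lemma~4.3]{DinPin23}, and your computation is precisely the standard CAT(0) argument underlying that reference, so this is essentially the same approach.
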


In the following, we will use the following notation: 
given a function $f : \N \to \N$, we write $\wh{f}(k) = \max\set{\omega_1(k), f(\omega_1(k))}$,
where $\omega_1$ is defined as in Lemma \ref{prop:tmf:metastability:afix-convex}.
\begin{lemma}\label{prop:tmf:metastability:proj-on-interval}
    For all $k \in \N$ and $f : \N \to \N$ monotone, 
    there exists $N \leq \omega_1(\wh{f}^{(r(k))}(0))$ and $x \in \afixb{f(N) + 1}{}$ such that,
    for all $y \in \afixb{N + 1}{}$, we have that 
    \begin{align}
        \forall t \in [0, 1] \left(d^2(x, u) < d^2(w_t, u) + \frac{1}{k + 1}\right). \label{eq:prop:tmf:metastability:proj-on-interval:local-needed}
    \end{align}
\end{lemma}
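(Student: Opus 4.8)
The plan is to read off $x$ and $N$ from the approximate‑projection statement of Lemma~\ref{prop:tkm:metastability:afix-pojection} and then to upgrade a bound known only for points $y \in \afixb{N+1}{}$ to one valid along the whole segment $w_t = (1-t)x + ty$, using the ``convexity'' of approximate common fixed point sets from Lemma~\ref{prop:tmf:metastability:afix-convex}. Concretely, given $k$ and monotone $f$, I would apply Lemma~\ref{prop:tkm:metastability:afix-pojection} to $k$ and to the monotone function $\wh{f}$ (not to $f$ directly), obtaining some $N_0 \le \wh{f}^{(r(k))}(0)$ and a point $x \in \afixb{\wh{f}(N_0)+1}{}$ with $d^2(x,u) < d^2(z,u) + \tfrac{1}{k+1}$ for all $z \in \afixb{N_0+1}{}$ --- although Lemma~\ref{prop:tkm:metastability:afix-pojection} is stated with ``$\le$'', inspection of its proof shows it equally yields this strict inequality, since the negation $d^2(x,u) \ge d^2(z,u) + \tfrac{1}{k+1}$ drives exactly the same telescoping contradiction via condition \eqref{eq:tkm:metastability:afix-pojection:rec-cond2}. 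I then set $N := \omega_1(N_0)$; since $\omega_1$ is nondecreasing this gives at once $N \le \omega_1(\wh{f}^{(r(k))}(0))$, the required bound.

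The role of $\wh{f}(k) = \max\{\omega_1(k), f(\omega_1(k))\}$ is precisely that, with $N = \omega_1(N_0)$, one has $\wh{f}(N_0) = \max\{N, f(N)\}$, hence $\wh{f}(N_0) \ge f(N)$ and $\wh{f}(N_0) \ge N$: the first inequality yields $\afixb{\wh{f}(N_0)+1}{} \subseteq \afixb{f(N)+1}{}$, so $x \in \afixb{f(N)+1}{}$, and the second yields $x \in \afixb{N+1}{}$ as well. Now fix any $y \in \afixb{N+1}{}$ and $t \in [0,1]$, and put $w_t = (1-t)x + ty$. Both $x$ and $y$ lie in $\ballc{p}{K}$ and satisfy $d(x, T_n x), d(y, T_n y) \le \tfrac{1}{N+1} = \tfrac{1}{\omega_1(N_0)+1} < \tfrac{1}{\omega_1(N_0)}$ for every $n$, so applying Lemma~\ref{prop:tmf:metastability:afix-convex} with its parameter equal to $N_0$ (and $v_1 = x$, $v_2 = y$) gives $d(w_t, T_n w_t) < \tfrac{1}{N_0+1}$ for all $n$; combined with $w_t \in \ballc{p}{K}$ (which follows from (W1)) this says $w_t \in \afixb{N_0+1}{}$. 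Instantiating the projection inequality above at $z := w_t$ now gives $d^2(x,u) < d^2(w_t,u) + \tfrac{1}{k+1}$, and since $y$ and $t$ were arbitrary this is exactly \eqref{eq:prop:tmf:metastability:proj-on-interval:local-needed}.

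The step I expect to require the most care is simply keeping the three tolerance levels aligned --- the level $\wh{f}(N_0)+1$ witnessing that $x$ is a common approximate fixed point, the level $N+1$ over which $y$ ranges, and the intermediate level $N_0+1$ at which the segment points $w_t$ must be approximate fixed points so that Lemma~\ref{prop:tkm:metastability:afix-pojection}'s projection inequality can be instantiated at them --- while keeping the iteration count at $\wh{f}^{(r(k))}(0)$ and not some larger iterate of $\wh{f}$. Once the definition of $\wh{f}$ and the monotonicity of $\omega_1$ and $f$ are used to settle these inclusions, the argument is essentially immediate, as Lemma~\ref{prop:tkm:metastability:afix-pojection} already supplies the needed existence statement and no further recursive construction is required at this stage.
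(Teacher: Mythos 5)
Your proposal is correct and follows essentially the same route as the paper: apply Lemma~\ref{prop:tkm:metastability:afix-pojection} to $\wh{f}$, set $N := \omega_1(N_0)$, use the definition of $\wh{f}$ and monotonicity to get $x \in \afixb{f(N)+1}{}$, upgrade $y$ to the segment points $w_t$ via Lemma~\ref{prop:tmf:metastability:afix-convex} with parameter $N_0$, and instantiate the projection inequality at $w_t$. Your remarks on the strict versus non-strict inequality (which the contradiction argument of Lemma~\ref{prop:tkm:metastability:afix-pojection} indeed also yields) and on $w_t \in \ballc{p}{K}$ via (W1) are details the paper passes over silently, and they are handled correctly.
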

\begin{proof}
    Let $k \in \N$ and $f : \N \to \N$ be a monotone function.
    Applying Lemma \ref{prop:tkm:metastability:afix-pojection} with 
    $k := k$ and $f := \wh{f}$ yields an 
    $N_0 \leq \wh{f}^{(r(k))}(0)$ and an 
    $x \in \afixb{\wh{f}(N_0) + 1}{}$ such that, 
    \begin{align}
        \forall y \in \afixb{N_0 + 1}{} \left(d^2(x, u) \leq d^2(y, u) + \frac{1}{k + 1}\right) \label{eq:prop:tmf:metastability:proj-on-interval:local-hypothesis}
    \end{align}
    We choose $N := \omega_1(N_0)$ and $x := x$ and we prove that they satisfy the claim.
    Indeed, since $\omega_1$ is monotone and $N_0 \leq \wh{f}^{(r(k))}(0)$, 
    it follows that $N = \omega_1(N_0) \leq \omega_1(\wh{f}^{(r(k))}(0))$.
    Note also that $f(N) = f(\omega_1(N_0)) \leq \max\set{\omega_1(N_0), f(\omega_1(N_0))} = \wh{f}(N_0)$.
    Therefore, for all $n \in \N$,
    \begin{align}
        d(x, T_n x) \leq \frac{1}{\wh{f}(N_0) + 1} \leq \frac{1}{f(N) + 1}
    \end{align}
    showing that $x$ is indeed a $\dfrac{1}{f(N) + 1}$-approximate common fixed point of $(T_n)$.
    We are left to show that \eqref{eq:prop:tmf:metastability:proj-on-interval:local-needed} is also satisfied. 
    Let $y \in \afixb{N + 1}{}$.
    We have, for all $n \in \N$, that 
    \begin{align*}
        d(x, T_n x) &\leq \frac{1}{\wh{f}(N_0) + 1} = \frac{1}{\max\set{\omega_1(N_0), f(\omega_1(N_0))} + 1} \leq \frac{1}{\omega_1(N_0)}
    \intertext{and that}
        d(y, T_n y) &\leq \frac{1}{N + 1} = \frac{1}{\omega_1(N_0) + 1} \leq \frac{1}{\omega_1(N_0)}.
    \end{align*}
    We have thus shown that $x$ and $y$ are both $\dfrac{1}{\omega_1(N_0)}$-approximate common fixed points.
    We can therefore apply Lemma \ref{prop:tmf:metastability:afix-convex} with 
    $k := N_0$, $v_1 := x$ and $v_2 := y$ to get that, for all $n \in \N$, 
    $d(w_t, T_n w_t) \leq \dfrac{1}{N_0 + 1}$. 
    Thus, \eqref{eq:prop:tmf:metastability:proj-on-interval:local-hypothesis} may be applied to $y := w_t$ in order to get the desired conclusion.
\end{proof}

The following is due to \cite[Proposition~4.4]{DinPin23} and combines the previous two results.

\begin{lemma}\label{prop:tmf:metastability:afix-proj-variational}
    For any $k \in \N$ and monotone function $f : \N \to \N$,
    there exists $N \leq \omega_1(\wh{f}^{(r(\omega_2(k)))}(0))$ and 
    $x \in \afix{f(N) + 1}{}$ such that 
    \begin{align}
        \forall y \in \afixb{N + 1}{} \left(\ql{x}{u}{x}{y} \leq \frac{1}{k + 1}\right) 
    \end{align}
\end{lemma}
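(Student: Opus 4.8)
The plan is to combine Lemma \ref{prop:tmf:metastability:proj-on-interval} with Lemma \ref{prop:tmf:metastability:proj-to-proj-variational}, exactly as in the proof of \cite[Proposition~4.4]{DinPin23}. First I would apply Lemma \ref{prop:tmf:metastability:proj-on-interval} not to the given $k$, but to $\omega_2(k)$, where $\omega_2$ is the function from Lemma \ref{prop:tmf:metastability:proj-to-proj-variational}. This produces an index $N \leq \omega_1(\wh{f}^{(r(\omega_2(k)))}(0))$ — which is exactly the bound claimed in the statement — together with a point $x \in \afixb{f(N) + 1}{}$ such that, for every $y \in \afixb{N + 1}{}$,
\begin{align*}
    \forall t \in [0, 1] \left(d^2(x, u) < d^2(w_t, u) + \frac{1}{\omega_2(k) + 1}\right),
\end{align*}
where $w_t = (1 - t) x + t y$. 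In particular $d^2(x, u) \leq d^2(w_t, u) + \frac{1}{\omega_2(k)}$ for all $t$, since $\frac{1}{\omega_2(k)+1} \le \frac{1}{\omega_2(k)}$.

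Now fix an arbitrary $y \in \afixb{N + 1}{}$. Both $x$ and $y$ lie in $\ballc{p}{K}$ (this is built into the definition of $\afixb{\cdot}{}$), so the hypothesis of Lemma \ref{prop:tmf:metastability:proj-to-proj-variational} is met with this $x$, this $y$, and the same $u$: we have just verified $\forall t \in [0,1]\,(d^2(x, u) \leq d^2(w_t, u) + \frac{1}{\omega_2(k)})$. Lemma \ref{prop:tmf:metastability:proj-to-proj-variational} then yields directly $\ql{x}{u}{x}{y} \leq \frac{1}{k + 1}$. Since $y \in \afixb{N + 1}{}$ was arbitrary, this gives the desired conclusion, and the bound on $N$ and the membership $x \in \afixb{f(N)+1}{}$ were already established in the first step.

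There is essentially no obstacle here — the lemma is a clean composition of the two preceding results, and the only thing to be careful about is the bookkeeping: one must invoke Lemma \ref{prop:tmf:metastability:proj-on-interval} with argument $\omega_2(k)$ (so that the resulting approximation quality $\frac{1}{\omega_2(k)+1}$ feeds correctly into Lemma \ref{prop:tmf:metastability:proj-to-proj-variational}), and correspondingly the index bound it returns is $\omega_1(\wh{f}^{(r(\omega_2(k)))}(0))$ rather than $\omega_1(\wh{f}^{(r(k))}(0))$. One should also note that the statement writes $x \in \afix{f(N)+1}{}$ (without the ``$b$'') whereas the earlier lemmas use $\afixb{\cdot}{}$; these denote the same set, so no issue arises.
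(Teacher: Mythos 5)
Your proposal is correct and matches the paper's own proof: both apply Lemma \ref{prop:tmf:metastability:proj-on-interval} with $k := \omega_2(k)$ and then feed the resulting inequality into Lemma \ref{prop:tmf:metastability:proj-to-proj-variational}. Your explicit remark that $\frac{1}{\omega_2(k)+1} \leq \frac{1}{\omega_2(k)}$ is a small bookkeeping point the paper leaves implicit, but it does not change the argument.
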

\begin{proof}
    Let $k \in \N$ and $f : \N \to \N$ be a monotone function.
    Apply Lemma \ref{prop:tmf:metastability:proj-on-interval} with $k := \omega_2(k)$ and $f := f$ 
    to get an $N \leq \omega_1(\wh{f}^{(r(\omega_2(k)))}(0))$ 
    and an $x \in \afixb{f(N) + 1}{}$ such that 
    \begin{align}
        \forall y \in \afixb{N + 1}{} \forall t \in [0, 1] \left(d^2(x, u) < d^2(w_t, u) + \frac{1}{\omega_2(k) + 1}\right), \label{eq:tmf:metastability:afix-proj-variational:local}
    \end{align}
    where $w_t = (1 - t) x + t y$. 
    Apply Lemma \ref{prop:tmf:metastability:proj-to-proj-variational} to \eqref{eq:tmf:metastability:afix-proj-variational:local}
    to get that 
    \begin{align*}
        \ql{x}{u}{x}{y} \leq \frac{1}{k + 1} 
    \end{align*}
    concluding the proof.
\end{proof}

The final lemma needed for the main result is the following, 
which generalizes to the setting of countable families \cite[Proposition~4.5]{DinPin23},
making use of a rate of $T_m$-asymptotic regularity for $(x_n)$.

\begin{lemma}\label{prop:tmf:metastability:weak-seq-conv-elim}
    For all $k \in \N$ and monotone function $f : \N \to \N$, 
    there exists $N \leq \omega_3(k, f)$ and $x \in \afixb{f(N) + 1}{}$ such that 
    \begin{align}
        \forall i \geq N \left(\ql{x}{u}{x}{x_i} \leq \frac{1}{k + 1}\right),
    \end{align}
    where $\omega_3(k, f) = \Phi(\omega_1(\wh{f \circ \Phi}^{(r(\omega_2(k)))}(0)))$,
    and $\Phi$ is, for any $m \in \N$, a monotone rate of $T_m$-asymptotic regularity for $(x_n)$. 
\end{lemma}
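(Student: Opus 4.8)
The plan is to carry out the quantitative elimination of the weak-sequential-convergence step behind \cite[Proposition~4.5]{DinPin23}: classically one passes to a weak cluster point of $(x_n)$, which lies in $\bigcap_{n} \fix{T_n}$ by demiclosedness and to which the projection characterization applies; here the point $p$ together with the ball $\ballc{p}{K}$ already plays that role, and the decisive observation is that once the index $i$ exceeds a rate of $T_m$-asymptotic regularity, the iterate $x_i$ is itself a common approximate fixed point belonging to $\afixb{N_0+1}{}$, so the variational inequality produced by Lemma~\ref{prop:tmf:metastability:afix-proj-variational} can be instantiated at $y := x_i$. The composition $f \circ \Phi$ is exactly what packages this bookkeeping.

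Concretely, given $k \in \N$ and a monotone $f : \N \to \N$, I would apply Lemma~\ref{prop:tmf:metastability:afix-proj-variational} with the same $k$ and with $f$ replaced by the monotone function $f \circ \Phi$ (monotone since both $f$ and $\Phi$ are). This yields an $N_0 \leq \omega_1(\wh{f \circ \Phi}^{(r(\omega_2(k)))}(0))$ and a point $x \in \afixb{(f \circ \Phi)(N_0) + 1}{}$ with $\forall y \in \afixb{N_0 + 1}{}\,\bigl(\ql{x}{u}{x}{y} \leq \frac{1}{k+1}\bigr)$. Setting $N := \Phi(N_0)$, monotonicity of $\Phi$ gives $N = \Phi(N_0) \leq \Phi\bigl(\omega_1(\wh{f \circ \Phi}^{(r(\omega_2(k)))}(0))\bigr) = \omega_3(k,f)$, while $f(N) = (f \circ \Phi)(N_0)$, so $x \in \afixb{f(N) + 1}{}$ is the required witness.

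It remains to verify $\ql{x}{u}{x}{x_i} \leq \frac{1}{k+1}$ for every $i \geq N$, and by the inequality above it suffices to check $x_i \in \afixb{N_0 + 1}{}$. Since $i \geq N = \Phi(N_0)$ and $\Phi$ is, uniformly in $m$, a monotone rate of $T_m$-asymptotic regularity, we get $d(x_i, T_m x_i) \leq \frac{1}{N_0 + 1}$ for all $m \in \N$; and $x_i \in \ballc{p}{K}$ follows from the standard boundedness estimate $d(x_n, p) \leq M \leq K$ for all $n$, which one proves by induction from axiom (W1) and nonexpansiveness (so that $d(u_n, p) \leq \max\set{d(x_n,p), d(u,p)}$ and $d(x_{n+1}, p) \leq d(u_n, p)$). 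Hence $x_i \in \afixb{N_0 + 1}{}$, and instantiating the variational inequality at $y := x_i$ concludes the proof.

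The only step that is not pure bookkeeping, and the one place where the countable-family setting is felt, is the need for a single rate $\Phi$ that works simultaneously for every $T_m$; this uniformity is exactly what Theorems~\ref{thm:Tn-ar-ar} and \ref{thm:Tm-ar} supply, since the rates stated there do not depend on $m$, so no additional argument is required. All remaining manipulations --- the monotone majorants $\wh{\cdot}$, the iterates $f^{(\cdot)}$, and composing with $\Phi$ --- proceed as in \cite{DinPin23,FerLeuPin19}.
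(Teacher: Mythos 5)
Your proposal is correct and follows essentially the same route as the paper's proof: apply Lemma~\ref{prop:tmf:metastability:afix-proj-variational} with $f \circ \Phi$, set $N := \Phi(N_0)$, use the uniform monotone rate $\Phi$ to see that each $x_i$ with $i \geq N$ lies in $\afixb{N_0+1}{}$, and instantiate the variational inequality at $y := x_i$. Your explicit check that $x_i \in \ballc{p}{K}$ (via the standard boundedness estimate) is a detail the paper leaves implicit, but it is not a different argument.
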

\begin{proof}
    Let $k \in \N$ and $f : \N \to \N$ be a monotone function.
    Apply Proposition \ref{prop:tmf:metastability:afix-proj-variational} with 
    $k := k$ and $f := f \circ \Phi$ to get an 
    $N_0 \leq \omega_1(\wh{f \circ \Phi}^{(r(\omega_2(k)))}(0))$ 
    and an $x \in \afixb{f(\Phi(N_0)) + 1}{}$ such that 
    \begin{align}
        \forall y \in \afixb{N_0 + 1}{} \left(\ql{x}{u}{x}{y} \leq \frac{1}{k + 1}\right). \label{eq:tmf:metastability:weak-seq-conv-elim:local}
    \end{align}
    Set $N := \Phi(N_0)$. Since $N_0 \leq \omega_1(\wh{f \circ \Phi}^{(r(\omega_2(k)))}(0))$,
    the monotonicity of $\Phi$ implies that $N \leq \omega_3(k, f)$.
    Since for any $m \in \N$, 
    $\Phi$ is a rate of $T_m$-asymptotic regularity for $(x_n)$, we know that  for any $m \in \N$,
    \begin{align*}
        \forall i \geq N \left(d(x_i, T_m x_i) \leq \frac{1}{N_0 + 1}\right).
    \end{align*}
    In other words, for all $i \geq N$, $x_i$ is a $\dfrac{1}{N_0 + 1}$-approximate common fixed point.
    We can therefore apply \eqref{eq:tmf:metastability:weak-seq-conv-elim:local} with $y := x_i$, for $i \geq N$,
    to get that 
    \begin{align*}
        \forall i \geq N \left(\ql{x}{u}{x}{x_i} \leq \frac{1}{k + 1}\right),
    \end{align*}
    concluding the proof.
\end{proof}
Note that the monotonicity requirement in the previous lemma can always be circumvented 
since if $\Phi$ is a rate of $T_m$-asymptotic regularity, then so is the monotone function
$\Phi^M(k) = \max\limits_{i \leq k} \Phi(i)$.
We are now in the position to obtain rates of metastability for $(x_n)$ in the next theorem.

\begin{theorem}
    Let $X$ be a CAT(0) space, $(T_n : X \to X)$ be a family of nonexpansive self-mappings of $X$
    possessing common fixed points and let $p \in X$ be such a point.
    Suppose that $(T_n)$
    satisfies \eqref{eq:prePtwo} with respect to a sequence $(\gamma_n)$ of positive reals.
    Let $(x_n)$ be the sequence generated by \eqref{eq:tmf} and
    assume conditions $(C1_q)$, $(C2_q)$ -- $(C6_q)$, $(C8_q)$ and $(C9_q)$ hold.
    Then, $\mu : \N \times \N^\N \to \N$ defined by 
    \begin{align} 
        \mu(k, f) = \zeta\Big(\wt{k}, \max\{\omega_3(12(\wt{k} + 1) - 1, \wt{f}), \rLimBetaOne(24K^2(\wt{k} + 1) - 1)\}\Big)
    \end{align}
    is a rate of metastability for $(x_n)$, where 
    \begin{align*}
        K &\geq \max\set{d(x_0, p), d(u, p)}, \\ 
        \wt{f}(i) &= 12K (\wt{k} + 1) (\ol{f}(i) + 1) (B(\ol{f}(i))) - 1, \\
        \ol{f}(i) &= f\left(\zeta\left(\wt{k}, \max\set{i, \rLimBetaOne(24K^2(\wt{k} + 1) - 1)}\right)\right), \\
        \wt{k} &= 4(k + 1)^2 - 1, \\ 
        \zeta(i, m) &= \rSumOneMinusBetaDivergent(m + \ceil{\ln(12K^2(i + 1))}) + 1, 
    \end{align*}
    and $\omega_3$ is defined as in Lemma \ref{prop:tmf:metastability:weak-seq-conv-elim},
    for $\Phi := \Psi$, with $\Psi$ given by Theorem \ref{thm:Tm-ar}.(i).
\end{theorem}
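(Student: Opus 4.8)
The plan is to deduce the metastability of $(x_n)$ from the quantitative Xu Lemma~\ref{lem:xu-metastable}.(i), applied to the recursive inequality of part~(iii) of the Proposition above, with the point $x$ supplied by Lemma~\ref{prop:tmf:metastability:weak-seq-conv-elim} playing the role of an approximate projection of $u$ onto $\bigcap_n \fix{T_n}$. First I would record, by an immediate induction on \eqref{eq:tmf} using (W1) and the nonexpansiveness of the $T_n$, that $(x_n) \subseteq \ballc{p}{K}$ and $d(u_n, p) \le M \le K$ for all $n$; hence $s_n := d^2(x_n, x) \le 4K^2 =: S$ and $d^2(x, u) \le 4K^2$ whenever $x \in \ballc{p}{K}$. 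Fix $k$ and, without loss of generality, a monotone $f$ (and monotone $B$ and monotone rate $\Psi$ from Theorem~\ref{thm:Tm-ar}.(i)). Since $\wt k = 4(k + 1)^2 - 1$, any bound $d^2(x_i, x) \le \frac{1}{\wt k + 1}$ gives $d(x_i, x) \le \frac{1}{2(k + 1)}$; so it suffices to produce $N \le \mu(k, f)$ and a point $x$ with $d^2(x_i, x) \le \frac{1}{\wt k + 1}$ for all $i \in [N, f(N)]$, since then $d(x_i, x_j) \le \frac{1}{k + 1}$ on that interval by the triangle inequality.

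To obtain such $N$ and $x$, I would apply Lemma~\ref{prop:tmf:metastability:weak-seq-conv-elim} with parameter $12(\wt k + 1) - 1$, with $\wt f$ in place of $f$, and with $\Phi := \Psi$ (this is where conditions $(C1_q)$, $(C2_q)$--$(C6_q)$, $(C8_q)$, and a bound $G$ on $(\gamma_n)$, enter, through Theorem~\ref{thm:Tm-ar}.(i)), obtaining $N_0 \le \omega_3(12(\wt k + 1) - 1, \wt f)$ and $x \in \afixb{\wt f(N_0) + 1}{}$ with $\ql{x}{u}{x}{x_i} \le \frac{1}{12(\wt k + 1)}$ for all $i \ge N_0$. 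Then I would set $n_0 := \max\set{N_0, \eta(24K^2(\wt k + 1) - 1)}$ (using $(C4_q)$), $N := \zeta(\wt k, n_0)$ and $q := \ol f(N_0)$; unwinding the definitions of $\ol f$ and $\zeta$ shows $q = f(N)$, while $N \le \mu(k, f)$ follows from $N_0 \le \omega_3(12(\wt k + 1) - 1, \wt f)$ and the monotonicity of $\zeta(\wt k, \cdot)$ (inherited from the monotone rate of divergence $\sigma$).

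Next I would run Xu's lemma on $(s_n)$. Part~(iii) of the Proposition — in the sharper form its proof actually establishes, which keeps the factor $(1 - \beta_n)$ on $d^2(x, u)$, giving $s_{n + 1} \le \beta_n(s_n + B(n) w_n) + (1 - \beta_n)(2\beta_n \ql{x}{u}{x}{x_n} + (1 - \beta_n) d^2(x, u))$ with $w_n = 2 d(u_n, x) d(T_n x, x) + d^2(T_n x, x)$ — has exactly the shape $s_{n + 1} \le (1 - a_n)(s_n + v_n) + a_n r_n$ with $a_n = 1 - \beta_n$ ($\sum a_n$ divergent with monotone rate $\sigma$ from $(C1_q)$), $v_n = B(n) w_n$, and $r_n = 2\beta_n \ql{x}{u}{x}{x_n} + (1 - \beta_n) d^2(x, u)$. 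For every $i \in [n_0, q]$: since $i \ge N_0$, $\ql{x}{u}{x}{x_i} \le \frac{1}{12(\wt k + 1)}$; since $i \ge \eta(24K^2(\wt k + 1) - 1)$, $1 - \beta_i \le \frac{1}{24K^2(\wt k + 1)}$; so $r_i \le \frac{1}{6(\wt k + 1)} + \frac{1}{6(\wt k + 1)} = \frac{1}{3(\wt k + 1)}$, which is hypothesis~(ii) of Lemma~\ref{lem:xu-metastable} for $k := \wt k$; and since $x \in \afixb{\wt f(N_0) + 1}{}$ gives $d(T_i x, x) \le \frac{1}{\wt f(N_0) + 1}$ with $\wt f(N_0) + 1 = 12K(\wt k + 1)(\ol f(N_0) + 1) B(\ol f(N_0))$, the definition of $\wt f$ is calibrated precisely so that (using $d(u_i, x) \le 2K$ and $B(i) \le B(\ol f(N_0)) = B(q)$) $v_i = B(i) w_i \le \frac{1}{3(\wt k + 1)(q + 1)}$, which is hypothesis~(i) with this $q$. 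Lemma~\ref{lem:xu-metastable}.(i), with $S = 4K^2$ so that its $\zeta$ coincides with the $\zeta$ of the statement, then yields $d^2(x_i, x) \le \frac{1}{\wt k + 1}$ for all $i \in [\zeta(\wt k, n_0), q] = [N, f(N)]$, and the reduction from the first paragraph closes the argument.

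I expect the main difficulty to lie not in any single inequality but in the bookkeeping that forces all the constants to match up at once: $\wt f$, $\ol f$ and the numerical constants ($12$, $24$, $12K$, $4(k + 1)^2$) must be chosen simultaneously so that $\zeta(\wt k, n_0) \le \mu(k, f)$, so that the interval $[\zeta(\wt k, n_0), q]$ delivered by Xu's lemma is exactly $[N, f(N)]$, and so that the two error streams $(r_i)$ and $(v_i)$ are both small over all of $[n_0, q]$. Two structural points are what make this possible: embedding the factor $B(\ol f(N_0))$ into $\wt f$ lets the otherwise unbounded multiplier $B(i)$ in $v_i = B(i) w_i$ be beaten by the approximate-fixed-point quality of $x$; and keeping $(1 - \beta_n)$ attached to the anchor term $d^2(x, u)$ lets that term be absorbed once $\beta_n$ is close enough to $1$, which is exactly what the slot $\eta(24K^2(\wt k + 1) - 1)$ inside $n_0$ secures via $(C4_q)$.
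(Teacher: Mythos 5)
Your proposal is correct and follows essentially the same route as the paper: invoke Lemma \ref{prop:tmf:metastability:weak-seq-conv-elim} with parameter $12(\wt{k}+1)-1$ and $\wt{f}$ (via $\Phi := \Psi$) to obtain the anchor point $x$, then feed the recursive inequality of the Proposition (in the form with $(1-\beta_n)$ attached to $d^2(x,u)$, which is indeed what its proof establishes) into Lemma \ref{lem:xu-metastable}.(i) with $a_n = 1-\beta_n$, $v_n = B(n)w_n$, $r_n = 2\beta_n\ql{x}{u}{x}{x_n} + (1-\beta_n)d^2(x,u)$, $S = 4K^2$, $n = \max\{N_0, \eta(24K^2(\wt{k}+1)-1)\}$ and $q = f(\zeta(\wt{k},n))$, and finish by the triangle inequality. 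The constant-matching details (the roles of $\wt f$, $\ol f$, $B(\ol f(N_0))$, and the $\eta$-term) are exactly those of the paper's proof.
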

\begin{proof}
    Let $k \in \N$ and $f : \N \to \N$ be a monotone function. 
    Applying Proposition \ref{prop:tmf:metastability:weak-seq-conv-elim} 
    with $k := 12(\wt{k} + 1) - 1$ and $f := \wt{f}$, we get 
    an $N_0 \leq \omega_3(12(\wt{k} + 1) - 1, \wt{f})$ and an 
    $x \in \afixb{\wt{f}(N_0) + 1}{}$ such that
    \begin{align}
        \forall i \geq N_0 \left(\ql{x}{u}{x}{x_i} \leq \frac{1}{12(\wt{k} + 1)}\right).
    \end{align}

    We will apply Lemma ~\ref{lem:xu-metastable} with 
    \begin{align*}
        s_n &:= d^2(x_n, x), \\
        S &:= 4K^2,\\ 
        v_n &:= B(n) w_n = B(n) (2d(y_n, x)d(T_n x, x) + d^2(T_n x, x)), \\ 
        a_n &:= 1 - \beta_n, \\ 
        r_n &:= 2\beta_n \ql{x}{u}{x}{x_n} + (1 - \beta_n) d^2(x, u), \\ 
        q &:= f(\zeta(\wt{k}, n)), \\ 
        n &:= \max\set{N_0, \rLimBetaOne(24K^2(\wt{k} + 1) - 1)}, \\ 
        k &:= 4(k + 1)^2.  \\
    \end{align*}
    Let us first show that the hypotheses of the lemma are met. 
    Using \cite[Lemma~3.1]{Che23}, and the fact that $x \in \ballc{p}{K}$,
    it is clear that 
    \begin{align}
        d^2(x_i, x) \leq (d(x_i, p) + d(p, x))^2 \leq 4K^2
    \end{align}  
    holds for all $i \in \N$. Let now $i \in [n, q]$. 
    We want to show that $r_i \leq \dfrac{1}{3\wt{k} + 1}$ and that 
    $v_i \leq \dfrac{1}{(3\wt{k} + 1)(q + 1)}$.
    For $r_i$ we have:
    \begin{align*}
        r_i =&\ 2\beta_i \ql{x}{u}{x}{x_i} + (1 - \beta_i) d^2(x, u) \\ 
        \leq&\ 2\beta_i \frac{1}{12(\wt{k} + 1)} + (1 - \beta_i) d^2(x, u) \quad\text{as $i \geq N_0$} \\ 
        \leq&\ \frac{2}{12(\wt{k} + 1)} + (1 - \beta_i) d^2(x, u) \quad\text{as $\beta_i \leq 1$} \\ 
        \leq&\ \frac{2}{12(\wt{k} + 1)} + (1 - \beta_i) 4K^2 \\ 
        \leq&\ \frac{2}{12(\wt{k} + 1)} + \frac{4K^2}{24K^2(\wt{k} + 1)}  \quad\text{as $i \geq \rLimBetaOne(24K^2(\wt{k} + 1) - 1)$} \\ 
        \leq&\ \frac{2}{12(\wt{k} + 1)} + \frac{4K^2}{24K^2(\wt{k} + 1)} \\ 
        =&\ \frac{1}{6(\wt{k} + 1)} + \frac{1}{6(\wt{k} + 1)} 
        = \frac{1}{3(\wt{k} + 1)}. 
    \end{align*}

    Also, for $v_i$ we get:
    \begin{align*}
        v_i =&\ B(i) d(T_i x, x) (2d(y_i, x) + d(T_i x, x)) \\ 
        \leq&\ B(i) \frac{1}{\wt{f}(N_0) + 1} (2d(y_i, x) + d(T_i x, x)) \quad\text{as $x \in \afixb{\wt{f}(N_0) + 1}{}$} \\ 
        \leq&\ B(i) \frac{1}{\wt{f}(N_0) + 1} ((1 - \beta_i) d(u, x) + \beta_i d(x_i, x) + d(T_i x, x)) \quad\text{by (W1)} \\ 
        \leq&\ B(i) \frac{1}{\wt{f}(N_0) + 1} ((1 - \beta_i) 2K + \beta_i 2K + d(T_i x, x)) \\
        =&\ B(i) \frac{1}{\wt{f}(N_0) + 1} (2K + d(T_i x, x)) \\
        \leq&\ B(i) \frac{1}{\wt{f}(N_0) + 1} (2K + 2 d(p, x)) \quad\text{by nonexpansiveness and the fact that $p \in \fix{T_i}$}\\
        \leq&\ \frac{4 K B(i)}{\wt{f}(N_0) + 1} \\ 
        \leq&\ \frac{4K B(f(\zeta(\wt{k}, n)))} {\wt{f}(N_0) + 1} \quad\text{as $i \leq q = f(\zeta(\wt{k}, n))$ and $B$ is monotone} \\ 
        =&\ \frac{4K B(f(\zeta(\wt{k}, n)))}{12K(\wt{k} + 1)(\ol{f}(N_0) + 1)(B(\ol{f}(N_0)))} \\ 
        =&\ \frac{1}{3(\wt{k} + 1)(q + 1)} \quad\text{noting that, by definition, $\ol{f}(N_0) = f(\zeta(\wt{k}, n)) = q$}.
    \end{align*}
    Thus, we can apply Lemma \ref{lem:xu-metastable} to obtain that 
    \begin{align*}
        \forall i \in [\zeta(\wt{k}, n), q = f(\zeta(\wt{k}, n))] \left( d^2(x_i, x) \leq \frac{1}{\wt{k} + 1} \right),
    \end{align*}
    and therefore, taking into account that $\wt{k} = 4(k + 1)^2$,
    \begin{align*}
        \forall i \in [\zeta(\wt{k}, n), f(\zeta(\wt{k}, n))] \left( d(x_i, x) \leq \frac{1}{2(k + 1)} \right).
    \end{align*}
    Finally, to show the conclusion let $i, j \in [\zeta(\wt{k}, n), f(\zeta(\wt{k}, n))]$.
    Using the inequality above, we have 
    \begin{align*}
        d(x_i, x_j) \leq d(x_i, x) + d(x_j, x) \leq \frac{1}{2(k + 1)} + \frac{1}{2(k + 1)} \leq \frac{1}{k + 1}.
    \end{align*}
    
\end{proof}

    Switching from $\qcondSumOneMinusBetaDivergent$ to $\qcondProdBetaZero$
    and applying Lemma \ref{lem:xu-metastable}.(ii) instead of (i), we can also obtain the following result. 

    \begin{theorem}
        Let $X$ be a CAT(0) space, $(T_n : X \to X)$ be a family of nonexpansive self-mappings of $X$
        possessing common fixed points and let $p \in X$ be such a point.
        Suppose that $(T_n)$
        satisfies \eqref{eq:prePtwo} with respect to a sequence $(\gamma_n)$ of positive reals.
        Let $(x_n)$ be the sequence generated by \eqref{eq:tmf} and
        assume conditions $(C1_q^*)$, $(C2_q)$ -- $(C6_q)$, $(C8_q)$ and $(C9_q)$ hold.
        Then, $\mu : \N \times \N^\N \to \N$ defined by 
        \begin{align} 
            \mu(k, f) = \zeta^*\Big(\wt{k}, \max\{\omega_3(12(\wt{k} + 1) - 1, \wt{f}), \rLimBetaOne(24K^2(\wt{k} + 1) - 1)\}\Big)
        \end{align}
        is a rate of metastability for $(x_n)$, where 
        \begin{align*}
            K &\geq \max\set{d(x_0, p), d(u, p)}, \\ 
            \wt{f}(i) &= 12K (\wt{k} + 1) (\ol{f}(i) + 1) (B(\ol{f}(i))) - 1, \\
            \ol{f}(i) &= f\left(\zeta\left(\wt{k}, \max\set{i, \rLimBetaOne(24K^2(\wt{k} + 1) - 1)}\right)\right), \\
            \wt{k} &= 4(k + 1)^2 - 1, \\ 
            \zeta^*(i, m) &= \sigma^*(m, 12K^2(i + 1) - 1) + 1, 
        \end{align*}
        and $\omega_3$ is defined as in Lemma \ref{prop:tmf:metastability:weak-seq-conv-elim},
        for $\Phi := \Psi^*$, with $\Psi^*$ given by Theorem \ref{thm:Tm-ar}.(ii).
    \end{theorem}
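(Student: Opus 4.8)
The plan is to run the proof of the preceding theorem essentially verbatim, making only the substitutions forced by the change of hypothesis from $\qcondSumOneMinusBetaDivergent$ to $\qcondProdBetaZero$: every occurrence of $\zeta$ is replaced by $\zeta^*$, the rate $\Psi$ of $T_m$-asymptotic regularity coming from Theorem~\ref{thm:Tm-ar}.(i) is replaced by the rate $\Psi^*$ coming from Theorem~\ref{thm:Tm-ar}.(ii) (which applies since $\qcondProdBetaZero$ and conditions $(C2_q)$--$(C6_q)$, $(C8_q)$ are in force and $(T_n)$ satisfies \eqref{eq:prePtwo} with respect to $(\gamma_n)$), and the final appeal to Lemma~\ref{lem:xu-metastable}.(i) becomes an appeal to Lemma~\ref{lem:xu-metastable}.(ii). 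Fix $k \in \N$ and a monotone $f : \N \to \N$.

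First I would apply Lemma~\ref{prop:tmf:metastability:weak-seq-conv-elim} with $k := 12(\wt{k} + 1) - 1$ and $f := \wt{f}$, where $\omega_3$ is the function of that lemma instantiated with $\Phi := \Psi^*$, obtaining some $N_0 \le \omega_3(12(\wt{k} + 1) - 1, \wt{f})$ and a point $x \in \afixb{\wt{f}(N_0) + 1}{}$ with $\ql{x}{u}{x}{x_i} \le \frac{1}{12(\wt{k} + 1)}$ for every $i \ge N_0$. Next I would invoke Lemma~\ref{lem:xu-metastable} with exactly the data of the unstarred proof: $s_n := d^2(x_n, x)$, $S := 4K^2$, $v_n := B(n) w_n$, $a_n := 1 - \beta_n$, $r_n := 2\beta_n \ql{x}{u}{x}{x_n} + (1 - \beta_n) d^2(x, u)$, $n := \max\set{N_0, \rLimBetaOne(24K^2(\wt{k} + 1) - 1)}$, $q := \ol{f}(N_0) = f(\zeta^*(\wt{k}, n))$ and $k := \wt{k}$. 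The three hypotheses of the lemma --- that $(s_n)$ is bounded above by $S = 4K^2$ (from \cite[Lemma~3.1]{Che23} and $x \in \ballc{p}{K}$), that $r_i \le \frac{1}{3(\wt{k} + 1)}$ on $[n, q]$ (from $i \ge N_0$, $\beta_i \le 1$ and $i \ge \rLimBetaOne(24K^2(\wt{k} + 1) - 1)$), and that $v_i \le \frac{1}{3(\wt{k} + 1)(q + 1)}$ on $[n, q]$ (from $x \in \afixb{\wt{f}(N_0) + 1}{}$, (W1), nonexpansiveness, $p \in \fix{T_i}$, monotonicity of $B$, and the definition of $\wt{f}$) --- are checked by the very same computations as in the unstarred case, since none of these estimates ever refers to $(\beta_n)$ through the series $\sum (1 - \beta_n)$.

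The only genuinely new point is the passage from part (i) to part (ii) of Lemma~\ref{lem:xu-metastable}. Part (ii) requires a monotone $\sigma^* : \N \times \N \to \N$ such that $\sigma^*(m, \cdot)$ is a rate of convergence for $\prod_{n = m}^\infty (1 - a_n) = 0$; since $a_n = 1 - \beta_n$, we have $\prod_{n = m}^\infty (1 - a_n) = \prod_{n = m}^\infty \beta_n$, so condition $\qcondProdBetaZero$ provides exactly such a $\sigma^*$. Part (ii) then yields $d^2(x_i, x) \le \frac{1}{\wt{k} + 1}$ for all $i \in [\zeta^*(\wt{k}, n), q]$, where $\zeta^*(\wt{k}, n) = \sigma^*(n, 3S(\wt{k} + 1) - 1) + 1 = \sigma^*(n, 12K^2(\wt{k} + 1) - 1) + 1$ because $S = 4K^2$ --- and this is precisely the $\zeta^*$ of the statement.

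Finally, since $\wt{k} + 1 = 4(k + 1)^2$, taking square roots gives $d(x_i, x) \le \frac{1}{2(k + 1)}$ for all $i \in [\zeta^*(\wt{k}, n), f(\zeta^*(\wt{k}, n))]$, whence $d(x_i, x_j) \le \frac{1}{k + 1}$ on that interval by the triangle inequality; and since $n \le \max\set{\omega_3(12(\wt{k} + 1) - 1, \wt{f}), \rLimBetaOne(24K^2(\wt{k} + 1) - 1)}$ and $\sigma^*$ is monotone, the witness $\zeta^*(\wt{k}, n)$ is at most $\mu(k, f)$, which establishes the claimed rate of metastability. I expect no conceptual obstacle here: the one point of substance is the elementary identity $\prod_{n = m}^\infty (1 - a_n) = \prod_{n = m}^\infty \beta_n$, which is what routes $\qcondProdBetaZero$ into the hypothesis of Lemma~\ref{lem:xu-metastable}.(ii); everything else is bookkeeping, chiefly checking that $\zeta^*$ propagates consistently through $\ol{f}$, $\wt{f}$, the endpoint $q$, and the final interval.
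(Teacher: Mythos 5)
Your proposal is correct and is exactly the argument the paper intends: the paper gives no separate proof of this theorem, stating only that one switches from $\qcondSumOneMinusBetaDivergent$ to $\qcondProdBetaZero$, uses $\Psi^*$ in place of $\Psi$, and applies Lemma \ref{lem:xu-metastable}.(ii) instead of (i), which is precisely what you carry out (including the key identity $\prod_{n=m}^\infty(1-a_n)=\prod_{n=m}^\infty\beta_n$ and the computation $3S(\wt{k}+1)=12K^2(\wt{k}+1)$ matching $\zeta^*$). Your reading of $\ol{f}$ as built from $\zeta^*$ rather than $\zeta$ is the sensible one consistent with the intended substitution.
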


\end{document}